\newcommand{\rot}[1]{#1} 
\newtheorem{thm}{Theorem}
\newtheorem*{thm*}{Theorem}
\newtheorem*{lem*}{Lemma}
\newtheorem*{prop*}{Proposition}
\newtheorem*{cor*}{Corollary}
\newtheorem*{conj*}{Conjecture}
\newtheorem*{Donkinconjs*}{Donkin Conjectures}
\theoremstyle{definition}
\newtheorem*{defn*}{Definition}
\newtheorem*{example*}{Example}
\newtheorem*{rmk*}{Remark}
\newtheorem*{que*}{Question}
\newcommand{\C}{\mathbb{C}} 
\newcommand{\Z}{\mathbb{Z}} 
\newcommand{\Hom}{\operatorname{Hom}} 
\newcommand{\Ext}{\operatorname{Ext}} 
\newcommand{\gen}[1]{\langle #1 \rangle}
\newcommand{\ch}{\operatorname{ch}} 
\newcommand{\St}{\operatorname{St}} 
\newcommand{\soc}{\operatorname{soc}} 
\newcommand{\rad}{\operatorname{rad}} 
\newcommand{\g}{\mathfrak{g}} 
\newcommand{\im}{\operatorname{im}} 
\numberwithin{equation}{subsection} 
\title[Weyl modules for type $G_2$ in characteristic $2$]%
{A computational study of certain Weyl modules\\for type $G_2$ in
  characteristic $2$}
\author{Stephen Doty}
\email{doty@math.luc.edu}
\address{\parbox{4.75in}{Department of
    Mathematics and Statistics, Loyola University Chicago,\\ Chicago,
    Illinois 60660 USA}}
\subjclass{Primary: 14L17, 20G05, 20G40, 20G43}
\keywords{Weyl modules, linear algebraic groups, hyperalgebra,
  algebra of distributions, GAP, computational algebra}
\begin{document}
\begin{abstract}\noindent
  Using the \texttt{WeylModules} \textsf{GAP} Package, we compute
  structural information about certain Weyl modules for type $G_2$ in
  characteristic $2$. This gives counterexamples to two conjectures
  stated by S.~Donkin in 1990. It also illustrates capabilities of the
  package, which can in principle be applied to Weyl modules for any
  simple, simply-connected algebraic group in any characteristic,
  subject of course to time and space limitations of computational
  resources.
\end{abstract}
\maketitle
\thispagestyle{empty}  

\section{\bf Introduction}\label{s:Intro}\noindent
There are two purposes to this paper: first, to illustrate the
computational capabilities of the author's \texttt{WeylModules}
package \cite{Doty:manual} in the context of specific examples, and
second, to give an elementary alternative computational approach to
recent counterexamples of Bendel, Nakano, Pillen, and Sobaje
\cite{BNPS:counter} obtained by different methods. 

Let $\Bbbk = \mathbb{F}_p$ be the field of $p$ elements, where $p$ is
a prime number, and let $G$ be a simple, simply-connected algebraic
group (scheme) defined and split over $\Bbbk$, with split maximal
torus $T$ and Borel subgroup $B$ determined by the negative roots.
Let $X=X(T)$ be the group of characters of $T$.
The set $X$ is partially ordered by
\[
\lambda \le \mu \iff \lambda - \mu \text{ is a sum of positive roots}.
\]
In a 1990 lecture at MSRI in Berkeley, S.~Donkin stated two
conjectures motivated by his work \cites{Donkin, Donkin:tilt} on good
filtrations of $G$-modules. (In this paper, $G$-modules are always
assumed to be rational.)

\subsection*{The Donkin conjectures}\hfill
\par A.\; The \emph{good $(p,r)$-filtration conjecture:} A $G$-module
$M$ has a good $(p,r)$-filtration if and only if $\St_r \otimes M$ has
a good filtration.

\par B.\; The \emph{tilting module conjecture:} For any $\lambda \in
X_r$, the tilting module $T(2(p^r-1)\rho + w_0\lambda)$ is isomorphic
to $Q_r(\lambda)$, as $G_r$-modules.

\medskip

See \S\ref{s:notation} for unexplained notation.  Both of the above
conjectures are in fact theorems for any $p \ge 2h-4$, where $h$ is
the Coxeter number, but for small primes they remained open until the
discovery of the first counterexamples in \cite{BNPS:counter}. The
conjectures are interrelated, and the latter was inspired by an
earlier conjecture of Humphreys and Verma. (The introduction to
\cite{BNPS:counter} gives an excellent summary of the history of these
developments.) Progress has continued in \cites{BNPS1, BNPS2,
  BNPS3,BNPS:restrict} and in particular many more counterexamples are
now known. It is interesting that the known counterexamples are
limited to $p=2,3$.

This paper is organized as follows. We define our notational
conventions in \S\ref{s:notation}. In \S\ref{s:maximal} we explain the
general ideas underlying the \texttt{WeylModules} package. The
specific case study for $\mathrm{G}_2$ in characteristic $2$ starts in
\S\ref{s:case}. The computational information obtained from the
\texttt{WeylModules} package is contained in Appendices
\ref{ss:Delta20}--\ref{ss:Delta22}.  The main theoretical conclusions
obtained from the computations are presented in \S\ref{ss:main}.
Theorem~\ref{t:TMC} gives a new proof of the counterexample to the
tilting module conjecture found in \cite{BNPS:counter}.
Theorem~\ref{t:Hom} tabulates the dimensions of the intertwining Hom
spaces between Weyl modules of highest weight bounded above by $(2,2)$.
Theorem~\ref{t:p-filt} answers an old question of Jantzen
\cite{Jan:80} in the negative and its Corollary gives a new
counterexample to the $p$-filtration conjecture.

\section{\bf Notation and terminology}\label{s:notation}\noindent
We define some general notation and terminology used in this paper.

\subsection{Notation}
We follow the notational conventions of \cite{BNPS:St}*{\S2.1}. In
particular, we let
\[
\begin{array}{lll}
  \Phi &=&  \text{root system of } (G,T).\\
  \Phi^+ &=& \text{positive roots}.\\
  \Phi^- &=& \text{negative roots}.\\
  \rho &=& \frac{1}{2} \sum_{\alpha \in \Phi^+} \alpha.\\
  \St_r &=& \Delta((p^r-1)\rho), \text{ the $r$th Steinberg module}.\\
  \St &=& \St_1 = \Delta((p-1)\rho), \text{ the Steinberg module}.\\
  X_+ &=& \{\lambda \in X(T) \mid 0 \le \gen{\lambda,\alpha^\vee},
  \text{ all simple roots } \alpha\}.\\
  L(\lambda) &=& \text{simple $G$-module of highest weight }
  \lambda \; (\lambda \in X_+).\\
  \Delta(\lambda) &=& \text{Weyl module of highest weight }
  \lambda \; (\lambda \in X_+).\\
  \nabla(\lambda) &=& \text{ind}_B^G \lambda = \text{dual Weyl module
    of highest weight } \lambda \; (\lambda \in X_+).\\
  T(\lambda) &=& \text{indecomposable tilting module of highest weight }
  \lambda \; (\lambda \in X_+).\\
  X_r &=& \{\lambda \in X_+ \mid \gen{\lambda,\alpha^\vee} < p^r,
  \text{ all simple roots } \alpha\}.\\
  G_r &=& \text{$r$th Frobenius kernel of $G$}.\\
  Q_r(\lambda) &=& \text{$G_r$-injective hull of } L(\lambda)|_{G_r},
  \text{ for } \lambda \in X_r. 
\end{array}
\]
This is nearly the same as the notation used in Jantzen's book
\cite{Jan:book}, except that our $\Delta(\lambda)$ and
$\nabla(\lambda)$ are respectively denoted by $V(\lambda)$ and
$H^0(\lambda)$ there.

\subsection{Weights and characters}
Elements of $X_+$ are \emph{dominant weights} and elements of $X=X(T)$
are \emph{weights}. A finite dimensional $G$-module $M$ is the direct
sum of its weight spaces: $M = \bigoplus_{\mu \in X} M_\mu$, where the
weight space $M_\mu$ is the eigenspace of the generalized eigenvalue
$\mu$. The (formal) character of $M$ is $\ch M = \sum_{\mu \in X}
(\dim M_\mu) e(\mu)$. For $\lambda \in X_+$, $\nabla(\lambda)$ has
simple socle and $\Delta(\lambda)$ has simple head, both of which are
isomorphic to $L(\lambda)$. For $\lambda \in X_+$, we set
\[
  \chi(\lambda) = \ch \Delta(\lambda) = \ch \nabla(\lambda); \quad
  \ell(\lambda) = \ch L(\lambda).
\]
The $\chi(\lambda)$ are given by Weyl's character formula, but the
$\ell(\lambda)$ are in general much harder to compute.

\subsection{Filtrations}\label{ss:filt}
A module $V$ has a \emph{good filtration} (resp., \emph{Weyl
filtration}) if it has a filtration with each successive quotient
isomorphic to $\nabla(\lambda)$ (resp., $\Delta(\lambda)$), for some
$\lambda \in X_+$. For any $\lambda \in X_+$, $T(\lambda)$ is the
unique (up to isomorphism) $G$-module of highest weight $\lambda$
having both a good and Weyl filtration (see \cite{Donkin:tilt}). We
have
\begin{equation}
\Delta(\lambda) \subset T(\lambda) \quad \text{and}\quad
\nabla(\lambda) \text{ is a quotient of } T(\lambda).
\end{equation}
Both $L(\lambda)$ and $T(\lambda)$ are contravariantly self-dual; that
is: $L(\lambda) = {}^\tau L(\lambda)$ and $T(\lambda) = {}^\tau
T(\lambda)$, where ${}^\tau M$ is the contravariant dual
\cite{Jan:book}*{II.2.12} of the $G$-module $M$.

The representation theory of the first Frobenius kernel $G_1$ is
equivalent to the restricted representation theory of the Lie algebra
$\text{Lie}(G)$. The region $X_1$ in $X_+$ is known as the
\emph{restricted} region. For any $r\ge 1$, any $\lambda \in X_+$ has
a unique expression of the form $\lambda = \lambda_0 + p^r \lambda_1$,
where $\lambda_0 \in X_r$, $\lambda_1 \in X_+$. Then we set
\begin{equation}
  \Delta^{(p,r)}(\lambda) = L(\lambda_0) \otimes \Delta(\lambda_1)^{(r)}, \quad
  \nabla^{(p,r)}(\lambda) = L(\lambda_0) \otimes \nabla(\lambda_1)^{(r)}
\end{equation}
where $M^{(r)}$ denotes the $r$th Frobenius twist of a $G$-module $M$.
A \emph{good} (resp., \emph{Weyl}) $(p,r)$-\emph{filtration} of a
$G$-module $M$ is a filtration with successive quotients of the form
$\nabla^{(p,r)}(\lambda)$ (resp., $\Delta^{(p,r)}(\lambda)$). In case
$r=1$, these filtrations are simply called $p$-filtrations.

\subsection{Kostant's integral form}\label{ss:Kostant}
Let $\g = \g_\C$ be the complex Lie algebra with the same root system
as $\text{Lie}(G)$. Fix a Chevalley basis
\begin{equation}
\{x_\alpha \mid \alpha \in \Phi \} \cup \{h_\alpha \mid \alpha \text{
  a simple root}\}
\end{equation}
in $\g$ and set $y_\alpha = x_{-\alpha}$ for any $\alpha \in
\Phi^+$. Let $U = U(\g)$ be the universal enveloping algebra of
$\g$. The Kostant $\Z$-form $U_\Z$ of $U$ is the $\Z$-subalgebra of
$U$ generated by all divided powers $x_\alpha^{(m)} := x_\alpha^m/m!$
($\alpha \in \Phi$, $m \ge 0$). The \emph{hyperalgebra} or
\emph{algebra of distributions} of $G$ is the algebra
\[
U_\Bbbk = U_\Z \otimes_\Z  \Bbbk . 
\]
We will always identify $x_\alpha^{(m)}$ in $U_\Z$ with its image
$x_\alpha^{(m)}\otimes 1$ in $U_\Bbbk$; similarly we identify
$\binom{h_\alpha}{m}$ with its image $\binom{h_\alpha}{m} \otimes 1$.
The category of $G$-modules is equivalent to the category of
$U_\Bbbk$-modules that admit compatible weight space decompositions.

For $\lambda \in X_+$, let $V(\lambda)$ be the simple $\g$-module of
highest weight $\lambda$. It is generated by a unique (up to scalar)
maximal vector $v_0$ (a nonzero weight vector annihilated by all
$x_\alpha$ ($\alpha \text{ simple}$)).  It is well known that
\begin{equation}
\Delta(\lambda) \cong V(\lambda)_\Z \otimes_\Z \Bbbk,   
\end{equation}
where $V(\lambda)_\Z = U_\Z v_0$ is the unique minimal admissible
lattice in $V(\lambda)$.

\section{\bf Maximal vectors}\label{s:maximal}\noindent
A nonzero weight vector in a $G$-module is \emph{maximal} if it is
invariant under the unipotent radical $U^+$ of the positive Borel
$B^+$ corresponding to the positive roots. Equivalently, a nonzero
weight vector is maximal if it is annihilated by all positive degree
divided powers of the $x_\alpha$ ($\alpha$ simple).  Maximal vectors
are also known as ``highest weight vectors''.  The core idea
underlying the \texttt{WeylModules} package \cite{Doty:manual} is that
of classifying all the maximal vectors in a given Weyl module. This is
computationally feasible only for modules of small enough dimension,
where ``small enough'' depends on the computational resources
available.  In a sense, the package automates calculations like those
of \cites{Irving,Xi}.  The \texttt{WeylModules} package builds on the
existing support for Lie algebras and their modules in \textsf{GAP}
\cite{GAP}, which in particular implements Kostant's
$\Z$-form. Calculating in the admissible lattice $V(\lambda)_\Z$, we
get information about $\Delta(\lambda)$ by reducing coefficients
modulo $p$.

\subsection{Primitive vectors}\label{ss:prim}
A nonzero weight vector $v$ in a $G$-module $M$ is
\emph{primitive} if there are submodules $M_2 \subset M_1$ of $M$ such
that $v \in M_1$ and the image of $v$ in $M_1/M_2$ is
maximal. Clearly, maximal vectors in $M$ are also primitive. See
\cite{Xi}*{\S1.1} for basic properties of primitive vectors. In
particular, if $M$ has a primitive vector of weight $\lambda$ then
$L(\lambda)$ is a composition factor of $M$.  It follows that
primitive vectors correspond bijectively with composition factors.

The main problem with primitive vectors is a lack of
uniqueness. Suppose that $v$ in $M$ is a primitive vector of weight
$\lambda$, with $M_2 \subset M_1$ as above. Let $P_1 = \gen{v}$ be the
submodule generated by $v$, and set $P_2 = M_2 \cap P_1$. Then the
image of $v$ is maximal in $P_1/P_2$. In other words, $v$ is
determined only up to some kernel.

\subsection{Intertwining morphisms}\label{ss:intertwines}
Let $\mu \in X^+$. By \cite{Jan:book}*{II.2.13} there is a functorial
isomorphism $\Hom_G(\Delta(\mu), M) \cong (M^{U^+})_\mu$, for each
rational $G$-module $M$. In other words,
\[
\Hom_G(\Delta(\mu), M) \cong \{ m \in M_\mu \mid m \text{ is a maximal
  vector or $m=0$ } \}.
\]
Thus, classifying all the maximal vectors in some fixed
$\Delta(\lambda)$ determines all the rational $G$-module homomorphisms
of the form
\begin{equation}
\Hom_G(\Delta(\mu),\Delta(\lambda))
\end{equation}
where $\mu \in X_+$ satisfies the condition $\mu \le \lambda$. This
makes it possible to compute the unique maximal submodule of
$\Delta(\lambda)$ and thus the simple character $\ell(\lambda) = \ch
L(\lambda)$. By Steinberg's tensor product theorem, it is enough to do
this for the highest weights $\lambda$ in the restricted region $X_1$,
in order to determine all the simple characters for $G$. This in turn
determines decomposition numbers of Weyl modules, submodules of Weyl
modules, and their corresponding quotients.

\subsection{Submodules and quotients}
By computing a basis for the submodule $\gen{m}$ of $\Delta(\lambda)$
generated by a maximal vector $m$ of weight $\mu$, we obtain the
character of the image of the corresponding morphism from
$\Delta(\mu)$ to $\Delta(\lambda)$. This yields structural information
on both Weyl modules; for instance, $m$ generates a copy of $L(\mu)$
in the socle of $\Delta(\lambda)$ if and only if $\ch \gen{m} =
\ell(\mu)$. Thus, we are able to compute the socle of
$\Delta(\lambda)$. Furthermore, by computing anew the maximal vectors
in the quotient module $\Delta(\lambda)/\gen{m}$ or, more generally,
$\Delta(\lambda)/S$ where $S$ is an arbitrary submodule, we can deduce
more structure. For example, taking $S = \soc \Delta(\lambda)$, the
maximal vectors of the quotient can be used to determine the second
socle $\soc^2 \Delta(\lambda)$. By iterating this, we can calculate
the full socle series of $\Delta(\lambda)$.  (See e.g.,
\cite{Humphreys:06}*{\S13.1} for definitions of socle and radical
series and their layers.)  The same method can be used to compute the
socle series of a quotient Weyl module; this requires package support
for subquotients, which is limited at present to submodules of
quotient Weyl modules.

\subsection{Extensions}
By finding a submodule $S$ such that $\Delta(\lambda)/S$ has length
two (i.e., has two linearly independent maximal vectors, which are
necessarily of distinct weights) we can find an $L(\mu)$ which appears
in the top of the radical $\rad \Delta(\lambda)$, and thus show that
$\Ext_G^1(L(\lambda), L(\mu)) \ne 0$ (see \cite{Jan:book}*{II.2.14}).
In this way, we can obtain information about extensions between simple
modules. The full radical series of $\Delta(\lambda)$ is in general
difficult to compute by the methods in the \texttt{WeylModules}
package, except in special cases.

\subsection{Ambiguous modules}\label{ss:ambig}
Following \cite{Doty:manual}, we say that a rational $G$-module is
\emph{ambiguous} if it has at least two linearly independent maximal
vectors of the same weight. Most Weyl modules of small enough highest
weight are unambiguous, in this sense. See
Appendices~\ref{ss:Delta30}, \ref{ss:Delta50}, and~\ref{ss:Delta22}
for examples of ambiguous modules.

\section{\bf Case study: $\mathrm{G}_2$ at $p=2$}\label{s:case}\noindent
From now on, we take $G$ to be of type $\mathrm{G}_2$, and we assume
that $\Bbbk = \mathbb{F}_2$ is the field of two elements.

\subsection{The root system}
Let $\varpi_1$, $\varpi_2$ be the fundamental weights for
$\mathrm{G}_2$, defined by the condition $\gen{\varpi_i,
  \alpha_j^\vee} = \delta_{i,j}$ for $i,j \in \{1,2\}$, where
$\alpha_1$, $\alpha_2$ are the simple roots, with $\alpha_1$ the short
root. Then $X = \Z \varpi_1 + \Z \varpi_2$. As usual, we identify an
element $a \varpi_1 + b\varpi_2$ in $X$ with the vector $(a,b)$ in $\Z
\times \Z$. In this notation, $X_+ = \{(a,b) \in X \mid a \ge 0, b \ge
0 \}$ and the simple roots are
\begin{equation}
\alpha_1 = (2,-1), \qquad \alpha_2 = (-3,2).
\end{equation}
Furthermore, the set of positive roots $\Phi^+$ is given by the
following table:
\[
\begin{array}{rl|l}
  \text{root} && \text{weight}\\ \hline
  \alpha_1 && (2,-1)\\
  \alpha_2 && (-3,2)\\
  \alpha_3 &=\; \alpha_2+\alpha_1 & (-1,1)\\
  \alpha_4 &=\; \alpha_2+2\alpha_1 & (1,0)\\
  \alpha_5 &=\; \alpha_2+3\alpha_1 & (3,-1)\\
  \alpha_6 &=\; 2\alpha_2 + 3\alpha_1 & (0,1)
\end{array}
\]
and we will always use this ordering of the positive roots. Let $x_i =
x_{\alpha_i}$ and $y_i = y_{\alpha_i}$ be positive and negative roots
vectors, and $h_i = h_{\alpha_i}$ the Cartan generators, in a
Chevalley basis
\begin{equation}\label{e:Chevalley}
x_1, \dots, x_6; \quad y_1, \dots, y_6; \quad h_1, h_2.
\end{equation}
These elements determine the structure of the $\Z$-form $U_\Z$, and
hence of the hyperalgebra $U_\Bbbk$, as discussed in
\S\ref{ss:Kostant}. In this paper, the Chevalley basis in
\eqref{e:Chevalley} is the one constructed by the
\texttt{ChevalleyBasis} command in \texttt{GAP}.

\subsection{Restricted Weyl modules}\label{s:restr}
The set $X_1$ of restricted weights in this case is $\{(0,0), (1,0),
(0,1), (1,1)\}$. The modules $\Delta(0,0)$ and $\Delta(1,1)$ are of
course simple, being isomorphic to the trivial and Steinberg modules,
resp. We find by a computer calculation that $\Delta(0,1)$ has a
unique maximal vector, hence must also be simple. On the other hand,
$\Delta(1,0)$ has two maximal vectors, namely $v_0$ and $y_4v_0$, and
hence is not simple. The submodule $\gen{y_4v_0}$ generated by the
latter (which has weight $(0,0)$), is isomorphic to $L(0,0) = \Bbbk$,
and the corresponding quotient $\Delta(1,0)/\gen{y_4v_0}$ has only one
maximal vector, and hence is simple (isomorphic to $L(1,0)$). Thus,
$\Delta(1,0)$ is uniserial of length two, and we have
\begin{equation}
  \Ext_G^1(L(1,0),L(0,0)) \cong \Bbbk.
\end{equation}
This also determines the simple characters of restricted highest
weight, as tabulated below,
\[
\begin{array}{c||c|c|c|c}
  \lambda & (0,0) & (1,0) & (0,1) & (1,1) \\ \hline
  \ell(\lambda)=\ch L(\lambda) &
  \chi(0,0) & \chi(1,0) - \chi(0,0) & \chi(0,1) & \chi(1,1) \\ \hline
  \dim L(\lambda) & 1 & 6 & 14 & 64
\end{array}
\]
and hence determines all the simple characters for $G$. Now that all
the simple characters are available, we can analyze the structure of
some Weyl modules of non-restricted highest weight.

\subsection{The (generalized) Schur algebra $S(\pi)$}
A theory of generalized Schur algebras was introduced by Donkin
\cites{Donkin:SA1, Donkin:SA2}, extending the construction of the
classical Schur algebras \cite{Green:80} from type A to other types of
root systems.  We will refer to them simply as Schur algebras,
dropping the ``generalized'' modifier. We are interested in studying
the structure of $\Delta(2,2)$, so we take
\[
\pi = \{\lambda \in X_+ \mid \lambda \le (2,2)\}.
\]
This set of 14 weights indexes the simple and projective modules in
the Schur algebra $S(\pi)$ defined by the saturated set
$\pi$.

We depict the points of $\pi$ below along with the alcove tiling of
the plane determined by the root system and the chosen characteristic
$2$.  In the picture, the point $-\rho$ is at the vertex of the angle
formed by the two heavy lines, which form the boundary of the shifted
dominant region $-\rho + X_+$.
\[
\begin{tikzpicture} 
\clip(-0.0,0.0)rectangle(7.5,6.95);
\begin{scope}[scale=1.0,ultra thin]
  
\foreach\x in{-20,...,20}
{ \draw(\x,0)--++(60:40); 
  \draw(20cm+\x cm,0)--++(120:40); 
}

\foreach\x in{0,...,35}
\draw(0,0)++(60:.5*\x)++(120:.5*\x)--++(21,0); 

\foreach\y in{-4,...,10}
\draw(60:\y)++(120:\y)--++(30:25); 

\foreach\y in{1,...,15}
\draw(60:\y)++(120:\y)--++(330:25); 

\foreach\x in{0,...,8}
\draw(1.5*\x,0)--++(0,25); 

\foreach\c in{0.8660254}{
\filldraw[blue](0,0)++(60:3)++(-90:\c) circle[radius=0.08] node(a) {};
\filldraw[blue](0,0)++(60:4.5)++(-90:\c) circle[radius=0.08] node {};
\filldraw[blue](0,0)++(60:4.5)++(-90:\c*2) circle[radius=0.08] node {};
\filldraw[blue](0,0)++(60:6)++(-90:\c) circle[radius=0.08];
\filldraw[blue](0,0)++(60:6)++(-90:\c*2) circle[radius=0.08];
\filldraw[blue](0,0)++(60:6)++(-90:\c*3) circle[radius=0.08];
\filldraw[blue](0,0)++(60:7.5)++(-90:\c) circle[radius=0.08] node(b) {};
\filldraw[blue](0,0)++(60:7.5)++(-90:\c*2) circle[radius=0.08];
\filldraw[blue](0,0)++(60:7.5)++(-90:\c*3) circle[radius=0.08];
\filldraw[blue](0,0)++(60:7.5)++(-90:\c*4) circle[radius=0.08];
\filldraw[blue](0,0)++(60:9)++(-90:\c*3) circle[radius=0.08] node(c) {};
\filldraw[blue](0,0)++(60:9)++(-90:\c*4) circle[radius=0.08];
\filldraw[blue](0,0)++(60:9)++(-90:\c*5) circle[radius=0.08];
\filldraw[blue](0,0)++(60:10.5)++(-90:\c*6) circle[radius=0.08] node(d) {};
}

\draw[red, line width=0.8mm] (0,0)--++(60:40); 
\draw[red, line width=0.8mm] (0,0)--++(30:40); 
\end{scope}
\end{tikzpicture}
\]
The three dots closest to the point $-\rho$, in ascending order by
euclidean distance, are the origin $(0,0)$, $\varpi_1=(1,0)$, and
$\varpi_2=(0,1)$.
The Hasse diagram (under $\le$) of $\pi$ has the form
\[
\scriptsize
\begin{tikzpicture}[scale = 0.9, baseline={(0,-1ex/2)}]
  \node(a) at (0,0) {(0,0)};
  \node(b) at (1,0) {(1,0)};
  \node(c) at (2,0) {(0,1)};
  \node(d) at (3,0) {(2,0)};
  \node(e) at (4,0) {(1,1)};
  \node(f) at (5,0) {(3,0)};
  \node(g) at (6,0) {(0,2)};
  \node(h) at (7,0) {(2,1)};
  \node(i) at (8,0) {(4,0)};
  \node(j) at (9,0) {(1,2)};
  \node(k) at (10,0) {(3,1)};
  \node(l) at (11,-0.6) {(5,0)};
  \node(m) at (11,0.6) {(0,3)};
  \node(n) at (12,0) {(2,2)};
  \draw (a)--(b); \draw (b)--(c); \draw (c)--(d); \draw (d)--(e);
  \draw (e)--(f); \draw (f)--(g); \draw (g)--(h); \draw (h)--(i);
  \draw (i)--(j); \draw (j)--(k);
  \draw (k)--(l);
  \draw (k)--(m);
  \draw (l)--(n);
  \draw (m)--(n);
\end{tikzpicture} .
\]
The set $\pi$ breaks up into just two linkage classes: $\{(1,1),
(3,1)\}$, and everything else. Hence, the Schur algebra $S(\pi)$ has
exactly two blocks: the Steinberg block (which has the two weights
$(1,1)$ and $(3,1)$; of course $(1,1)$ is the highest weight of the
Steinberg module) and the trivial block. As all the points of $\pi$
are located at vertices of the alcoves they meet, Jantzen's
translation principle carries zero information for this case study in
characteristic~$2$.

\subsection{Main results}\label{ss:main}
We now give our main conclusions from the case study.  We begin with a
new computational proof of a counterexample to the tilting module
conjecture.

\begin{thm}[\cite{BNPS:counter}]\label{t:TMC}
  The $G$-socle of the tilting module $T(2,2)$ is not isomorphic to
  $L(0,0) \cong \Bbbk$. Hence, $T(2,2)$ is not isomorphic to
  $Q_1(0,0)$ as $G_1$-modules. In other words, $T(2,2)$ is a
  counterexample to Donkin's tilting module conjecture.
\end{thm}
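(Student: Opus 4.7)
The plan is to reduce the statement to a computation inside the Weyl module $\Delta(2,2)$, exploiting the inclusion $\Delta(2,2) \hookrightarrow T(2,2)$, and then to use the enumeration of maximal vectors provided by the \texttt{WeylModules} package (Appendix~\ref{ss:Delta22}) to locate in the socle of $\Delta(2,2)$ something strictly larger than a single copy of $L(0,0)$.

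First I would unpack what the tilting module conjecture predicts here. Taking $p=2$, $r=1$, and $\lambda=(0,0)\in X_1$, we have $2(p^r-1)\rho + w_0\lambda = 2\rho = (2,2)$, so the conjecture asserts $T(2,2)\cong Q_1(0,0)$ as $G_1$-modules. Since $Q_1(0,0)$ is the $G_1$-injective hull of $L(0,0)|_{G_1}$, its $G_1$-socle is the simple $G_1$-module $L(0,0)|_{G_1}$. By Steinberg's tensor product theorem, any $G$-semisimple module restricts to a $G_1$-semisimple module, so $\soc_G T(2,2) \subseteq \soc_{G_1} T(2,2)$; under the conjectured $G_1$-isomorphism this would force the nonzero $G$-module $\soc_G T(2,2)$ to equal $L(0,0)$. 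Hence it suffices to prove the first assertion of the theorem, namely $\soc_G T(2,2) \ne L(0,0)$.

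Since $\Delta(2,2) \subseteq T(2,2)$, we have $\soc\Delta(2,2) \subseteq \soc_G T(2,2)$, so it is enough to show that $\soc\Delta(2,2)$ is strictly larger than $L(0,0)$. To compute $\soc\Delta(2,2)$, I would follow the strategy of \S\ref{s:maximal}: enumerate all maximal vectors in $\Delta(2,2)$ via the \texttt{WeylModules} package, and for each maximal vector $m$ of weight $\mu$ test whether $\gen{m}$ is simple by comparing $\ch\gen{m}$ with the character $\ell(\mu)$, which is known from \S\ref{s:restr} together with Steinberg's tensor product theorem. The maximal vectors for which $\gen{m}$ is simple contribute simple summands $L(\mu)$ to $\soc\Delta(2,2)$ and exhaust it.

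The main obstacle is the bookkeeping, since (as flagged in \S\ref{ss:ambig}) $\Delta(2,2)$ is ambiguous: some weight spaces carry more than one linearly independent maximal vector, so care is needed to distinguish linear combinations that actually generate simple submodules from those that are only primitive modulo a larger submodule. This is precisely what the \texttt{WeylModules} package is designed to automate, and once its output confirms that $\soc\Delta(2,2)$ is strictly bigger than $L(0,0)$, the reduction above yields the theorem.
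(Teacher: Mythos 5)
Your proposal is correct and follows essentially the same route as the paper: reduce via $\Delta(2,2)\subseteq T(2,2)$ and $G_1\lhd G$ to the claim that $\soc_G\Delta(2,2)\not\cong L(0,0)$, then settle that by a maximal-vector computation in $\Delta(2,2)$. The only (minor) difference is that the paper sidesteps the full socle calculation you outline: it observes that $\Delta(2,2)$ has unique maximal vectors of weights $(0,1)$ and $(0,0)$, and since $\Delta(0,1)$ and $\Delta(0,0)$ are themselves simple, the corresponding images are automatically $L(0,1)$ and $L(0,0)$, so $\soc_G\Delta(2,2)\supseteq L(0,0)\oplus L(0,1)$ without any need to test $\ch\langle m\rangle$ against $\ell(\mu)$ or to worry about the ambiguity at weight $(4,0)$.
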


\begin{proof}
The \texttt{WeylModules} package computes (unique up to scalar
multiple) maximal vectors of weight $(0,1)$ and $(0,0)$ in
$\Delta(2,2)$; these vectors are listed in
Appendix~\ref{ss:Delta22}. It follows that
$\Hom_G(\Delta(\mu),\Delta(2,2)) \cong \Bbbk$ for $\mu = (0,1)$ and
$(0,0)$. As $\Delta(0,1) = L(0,1)$ and $\Delta(0,0) = L(0,0)$ are
simple modules, it follows that the $G$-socle of $\Delta(2,2)$
contains at least $L(0,0) \oplus L(0,1)$.  Furthermore, since
$\Delta(2,2) \subset T(2,2)$, it follows that $\soc_G \Delta(2,2)
\subset \soc_G T(2,2)$. Thus $\soc_G T(2,2) \not\cong \Bbbk$, proving
the first claim.  Now assume that $T(2,2)|_{G_1} \cong Q_1(0,0)$. Then
\[
\soc_{G_1} T(2,2) \cong \Bbbk
\]
by the definition of $Q_1(0,0)$ as the injective hull of $L(0,0) =
\Bbbk$ in the category of rational $G_1$-modules.  But the inclusion
$G_1 \subset G$ implies that
\[
\soc_{G} T(2,2) \subset \soc_{G_1} T(2,2)
\]
which violates the first claim. The proof is complete.
\end{proof}

\begin{rmk*}
Using the \texttt{WeylModules} package, we have verified the more
precise fact that
\[
\soc_G \Delta(2,2) \cong L(0,1) \oplus L(0,0).
\]
See Appendix~\ref{ss:Delta22} for the complete socle series of $\Delta(2,2)$.
\end{rmk*}

The next result classifies the dimensions of the spaces of
intertwining morphisms between Weyl modules of highest weight
belonging to $\pi$.

\begin{thm}\label{t:Hom}
Let $\pi_0$ be the set of highest weights $\lambda$ indexing simples
in the trivial block of $S(\pi)$, and set $\pi_1 = \pi \setminus \pi_0
= \{(1,1),(3,1)\}$, the Steinberg block.  Then the intertwining
dimensions $\dim_\Bbbk \Hom_G(\Delta(\mu),\Delta(\lambda))$ for any
$\lambda \le \mu$ in $\pi_0$ are:
\[
\begin{array}{c|cccccccccccc}
  \lambda \backslash \mu  &
  \rot{00}&\rot{10}&\rot{01}&\rot{20}&\rot{30}&\rot{02}&\rot{21}&\rot{40}
  &\rot{12}&\rot{50}&\rot{03}&\rot{22}\\ \hline
00&  1&\cdot&\cdot&\cdot&\cdot&\cdot&\cdot&\cdot&\cdot&\cdot&\cdot&\cdot \\
10&  1&1&\cdot&\cdot&\cdot&\cdot&\cdot&\cdot&\cdot&\cdot&\cdot&\cdot \\
01&  0&0&1&\cdot&\cdot&\cdot&\cdot&\cdot&\cdot&\cdot&\cdot&\cdot\\
20&  0&1&1&1&\cdot&\cdot&\cdot&\cdot&\cdot&\cdot&\cdot&\cdot\\
30&  1&1&1&2&1&\cdot&\cdot&\cdot&\cdot&\cdot&\cdot&\cdot\\
02&  0&0&1&1&1&1&\cdot&\cdot&\cdot&\cdot&\cdot&\cdot\\
21&  0&0&1&1&1&1&1&\cdot&\cdot&\cdot&\cdot&\cdot\\
40&  1&1&0&1&1&0&1&1&\cdot&\cdot&\cdot&\cdot\\
12&  0&1&0&1&1&0&1&1&1&\cdot&\cdot&\cdot\\
50&  1&1&1&1&1&1&1&2&1&1&\cdot&\cdot\\
03&  1&1&0&0&1&1&0&1&0&0&1&\cdot\\
22&  1&1&1&1&1&1&1&2&1&1&1&1
\end{array}
\]
and the same tabulation for any $\lambda \le \mu$ in $\pi_1$ is given by:
\[
\begin{array}{c|cc}
  \lambda \backslash \mu  &
  \rot{11}&\rot{31} \\ \hline
  11&  1&\cdot\\
  31&  1&1
\end{array}
\]
where in both tables $\lambda$ indexes the rows, $\mu$ indexes the
columns, and rows and columns are ordered by a linear order compatible
with $\le$. The modules $\Delta(3,0)$, $\Delta(5,0)$, and
$\Delta(2,2)$ are all ambiguous.  To enhance readability, the
punctuation has been omitted in the row and column labels (e.g.,
$(2,2)$ is written as $22$, etc).
\end{thm}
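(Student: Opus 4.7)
The proof is entirely computational and rests on the standard correspondence (recalled in \S\ref{ss:intertwines} from \cite{Jan:book}*{II.2.13})
\[
  \dim_\Bbbk \Hom_G(\Delta(\mu), \Delta(\lambda)) = \dim_\Bbbk (\Delta(\lambda)^{U^+})_\mu,
\]
so each entry in the table equals the number of linearly independent maximal vectors of weight $\mu$ in $\Delta(\lambda)$. Indeed, any nonzero $G$-homomorphism $\Delta(\mu) \to \Delta(\lambda)$ is determined by the image of the canonical maximal vector of $\Delta(\mu)$, which must land in the $\mu$-weight space of $U^+$-invariants in $\Delta(\lambda)$. The problem thus reduces to producing, for each $\lambda \in \pi$, a complete list of maximal vectors in $\Delta(\lambda)$, organized by weight.

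My plan is to invoke the \texttt{WeylModules} package on each $\lambda \in \pi$. For a fixed $\lambda$, the package realizes $\Delta(\lambda)$ as the reduction mod $2$ of the admissible lattice $V(\lambda)_\Z = U_\Z v_0$ of \S\ref{ss:Kostant}, and on each weight space $\Delta(\lambda)_\mu$ computes the simultaneous kernel of the operators $x_{\alpha_1}^{(m)}$ and $x_{\alpha_2}^{(m)}$ for $m \ge 1$, working over $\Bbbk = \mathbb{F}_2$. Reading off the dimensions of these kernel intersections, as $\mu$ ranges over weights in $\pi$, fills in each row of the tabulation. Explicit data for the ambiguous modules $\Delta(3,0)$, $\Delta(5,0)$, and $\Delta(2,2)$ (which account for the three entries of value $2$) are recorded in Appendices~\ref{ss:Delta30}, \ref{ss:Delta50}, and~\ref{ss:Delta22}; the other modules are handled by analogous package invocations, and several entries are essentially immediate (e.g., $\Delta(0,0)$, $\Delta(0,1)$, and $\Delta(1,1) = \St$ are simple, and $\Delta(1,0)$ was fully analyzed in \S\ref{s:restr}).

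The block decomposition of $\pi$ recorded in \S\ref{s:case} both explains the splitting of the table into two matrices and supplies a consistency check: any Hom from $\Delta(\mu)$ into $\Delta(\lambda)$ vanishes when $\mu$ and $\lambda$ lie in different blocks, so all potential cross-block entries are forced to zero. For the Steinberg block $\pi_1 = \{(1,1),(3,1)\}$ the $2 \times 2$ matrix follows at once from the simplicity of $\Delta(1,1)$ and the presence of unique maximal vectors of weights $(1,1)$ and $(3,1)$ in $\Delta(3,1)$.

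The main obstacle is practical rather than conceptual: as $\lambda$ grows within $\pi$, the dimension of $\Delta(\lambda)$ and the cost of the simultaneous-kernel computation on each of its weight spaces both grow, with the largest case $\Delta(2,2)$ demanding the most effort. Some care is needed to ensure that, in the ambiguous modules, two linearly independent maximal vectors of a common weight are not accidentally conflated into one; this, however, is automatic from the rank data produced by the package. The smallest cases can be cross-validated against the hand computations already carried out in \S\ref{s:restr}, providing confidence in the larger ones.
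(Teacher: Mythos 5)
Your proposal is correct and takes essentially the same approach as the paper: both reduce the Hom-space dimensions to counting linearly independent maximal vectors of each weight $\mu$ in $\Delta(\lambda)$ (via \cite{Jan:book}*{II.2.13}) and then invoke the \texttt{WeylModules} package to enumerate those maximal vectors for each $\lambda \in \pi$, as recorded in \S\ref{s:restr} and the appendices. The paper's proof is terser, but the underlying argument is identical; your additional remarks on the block decomposition as a sanity check and on how the package computes $U^+$-invariants are accurate elaborations rather than a different route.
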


\begin{proof}
This is proved by a computer calculation of all the maximal vectors in
the Weyl modules $\Delta(\lambda)$ as $\lambda$ varies over $\pi$. The
computed maximal vectors are listed in \S\ref{s:restr} and in
the appendices.
\end{proof}

\begin{thm}\label{t:p-filt}
  The dual Weyl module $\nabla(0,2)$ does not admit a good
  $p$-filtration.
\end{thm}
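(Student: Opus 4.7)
The plan is to extract a character-level obstruction. By the linkage principle and the constraint $\mu \le (0,2)$, any good $p$-filtration of $\nabla(0,2)$ can use only pieces $\nabla^{(p,1)}(\mu)$ with $\mu$ in the trivial block $\pi_0$, giving exactly six candidates $\mu \in \{(0,0),(1,0),(0,1),(2,0),(3,0),(0,2)\}$. For each, Steinberg's tensor product theorem and the simple characters of \S\ref{s:restr} determine $\ch \nabla^{(p,1)}(\mu) = \ell(\mu_0)\,\chi(\mu_1)^{(1)}$ together with its composition factors; notably $\nabla^{(p,1)}(3,0)$ has composition factors $L(3,0)$ and $L(1,0)$, while $\nabla^{(p,1)}(2,0)$ has composition factors $L(2,0)$ and $L(0,0)$, and $\nabla^{(p,1)}(0,2) = L(0,1)^{(1)} = L(0,2)$ is simple.

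Assume a good $p$-filtration exists with piece multiplicities $a_\mu \in \Z_{\ge 0}$. Then $\ch \nabla(0,2) = \sum_\mu a_\mu \ch \nabla^{(p,1)}(\mu)$, and writing both sides in the basis of simple characters $\{\ell(\nu)\}$ yields a linear system whose unique solution, read off from the triangular composition-factor structure above, is
\[
a_{(0,0)} = m_{(0,0)} - m_{(2,0)}, \quad a_{(1,0)} = m_{(1,0)} - m_{(3,0)}, \quad a_{(0,1)} = m_{(0,1)},
\]
\[
a_{(2,0)} = m_{(2,0)}, \quad a_{(3,0)} = m_{(3,0)}, \quad a_{(0,2)} = 1,
\]
where $m_\nu := [\nabla(0,2) : L(\nu)]$. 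Non-negativity of all $a_\mu$ therefore reduces to the two inequalities $m_{(1,0)} \ge m_{(3,0)}$ and $m_{(0,0)} \ge m_{(2,0)}$.

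The final step is to read off the required decomposition numbers from the \texttt{WeylModules} output on $\Delta(0,2)$ in the appendix---recalling that $\Delta(0,2)$ and $\nabla(0,2)$ share composition series---and to verify that at least one of these inequalities fails. The main obstacle is pinning down the exact decomposition numbers: Theorem~\ref{t:Hom} records only the numbers of maximal vectors (giving the hint that $\Delta(0,2)$ has no $(1,0)$-maximal vector but does have a $(3,0)$-maximal vector, since $\dim\Hom_G(\Delta(3,0),\Delta(0,2))=1$ while $\dim\Hom_G(\Delta(1,0),\Delta(0,2))=0$), so the iterated primitive-vector and socle-series computations produced by the package are needed to determine the full multiplicities and confirm that $m_{(1,0)} < m_{(3,0)}$, yielding the required contradiction.
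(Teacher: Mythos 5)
There is a genuine gap: the character-level obstruction you are trying to extract does not exist, so the approach cannot be completed.

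From Appendix~\ref{ss:Delta02}, $\Delta(0,2)$ is uniserial with six socle layers, each a \emph{distinct} simple module: $L(0,1)$, $L(2,0)$, $L(0,0)$, $L(1,0)$, $L(3,0)$, $L(0,2)$. Therefore every composition multiplicity $m_\nu := [\nabla(0,2):L(\nu)]$ equals~$1$. Plugging these values into your linear system gives $a_{(1,0)} = m_{(1,0)} - m_{(3,0)} = 0$ and $a_{(0,0)} = m_{(0,0)} - m_{(2,0)} = 0$, with the remaining $a_\mu = 1$. All are nonnegative integers, so no contradiction arises. In fact
\[
\chi(0,2) = \ell(0,1) + \ch\nabla(1,0)^{(1)} + \ell(1,0)\cdot\ch\nabla(1,0)^{(1)} + \ell(0,2),
\]
so the character of $\nabla(0,2)$ \emph{is} a nonnegative sum of the candidate $p$-filtration characters; no counting argument can rule out a good $p$-filtration here. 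Your expectation that $m_{(1,0)} < m_{(3,0)}$ comes from misreading Theorem~\ref{t:Hom}: that table tabulates $\dim\Hom_G(\Delta(\mu),\Delta(\lambda))$, i.e.\ the number of genuine maximal vectors of weight $\mu$ \emph{inside} $\Delta(\lambda)$, which is not the same as the composition multiplicity $[\Delta(\lambda):L(\mu)]$. Indeed $\Hom_G(\Delta(1,0),\Delta(0,2))=0$ even though $L(1,0)$ occurs once as a composition factor of $\Delta(0,2)$; the corresponding primitive vector only becomes maximal in a proper subquotient.

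The paper's proof is necessarily structural rather than numerical, and relies on the uniserial ordering of the layers. Because $\nabla(0,2)$ is uniserial with radical layers (from the top) $L(0,1)$, $L(2,0)$, $L(0,0)$, $L(1,0)$, $L(3,0)$, $L(0,2)$, the top section of any good $p$-filtration must be $L(0,1)=\nabla^{(p,1)}(0,1)$, and the next section $F_{n-1}/F_{n-2}$ must be a $\nabla^{(p,1)}(\lambda)$ whose $G$-head is $L(2,0) = L(1,0)^{(1)}$. Steinberg's theorem then forces $F_{n-1}/F_{n-2}\cong\nabla(1,0)^{(1)}$, but the $G$-head of $\nabla(1,0)^{(1)}$ is $L(0,0)$, not $L(2,0)$, because $\nabla(1,0)$ is uniserial with socle $L(1,0)$ and head $L(0,0)$. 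That incompatibility of heads is the contradiction, and it is precisely the kind of information that a character computation cannot see: the ``correct'' pieces have the right characters to fill up $\nabla(0,2)$, but they refuse to stack in the order the uniserial structure demands.
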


\begin{proof}
(Compare with the proof of \cite{BNPS:counter}*{Thm.~3.5.1}.)
Assume for a contradiction that $\nabla(0,2)$ has a good
$p$-filtration (see \S\ref{ss:filt})
\[
0 = F_0 \subset F_1 \subset \cdots \subset F_{n-1} \subset
F_n = \nabla(0,2).
\]
The socle series of $\Delta(0,2)$ is computed in
Appendix~\ref{ss:Delta02}.  The radical series of $\nabla(0,2)$ is
obtained by turning the socle series of $\Delta(0,2)$ upside down, so
we know the radical series of $\nabla(0,2)$. Thus, by the information
in Appendix~\ref{ss:Delta02}, we must have
\[
\text{$F_n/F_{n-1} \cong L(0,1)$ and $F_{n-1}/F_{n-2} \cong
  \nabla(\mu)^{(1)}$,}
\]
for some dominant weight $\mu$ such that $L(1,0)^{(1)}$ is the
$G$-head of $\nabla(\mu)^{(1)}$. This implies that $p\mu = 2\mu \le
(0,2)$, so $\mu \le (0,1)$.  Hence $\mu$ must be one of the weights
$(0,0)$, $(1,0)$, or $(0,1)$. This forces $\mu = (1,0)$, as we know
that $\nabla(0,0) = L(0,0)$ and $\nabla(0,1) = L(0,1)$.  But
$\nabla(1,0) \not\cong L(1,0)$, so $L(1,0)^{(1)}$ is not the $G$-head
of $\nabla(1,0)^{(1)}$. This is a contradiction.
\end{proof}

The module $\nabla(0,2)$ answers an old question of Jantzen
\cite{Jan:80} (whether all $\nabla(\lambda)$ have a good
$p$-filtration, or equivalently, whether all Weyl modules have a Weyl
$p$-filtration) in the negative. It also provides a new counterexample
to Donkin's $(p,r)$-filtration conjecture stated in
\S\ref{s:Intro}. 

\begin{cor*}
The module $\nabla(0,2)$ fails to satisfy the $(p,r)$-filtration
conjecture.
\end{cor*}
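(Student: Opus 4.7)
The plan is to invoke Theorem~\ref{t:p-filt} together with the classical fact that the tensor product of two modules with good filtrations again has a good filtration (Mathieu's theorem, formerly the Wang--Donkin conjecture). The conjecture A in \S\ref{s:Intro} is an ``if and only if'' statement at $r=1$: the module $\nabla(0,2)$ must have a good $p$-filtration \emph{if and only if} $\St_1 \otimes \nabla(0,2)$ has a good filtration. Theorem~\ref{t:p-filt} already provides the negation of the left-hand side, so the entire task reduces to verifying the right-hand side.

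First I would note that the Steinberg module $\St_1 = \nabla((p-1)\rho) = \nabla(1,1)$ is a dual Weyl module, hence trivially has a good filtration, and $\nabla(0,2)$ also has a good filtration (a single step). Next I would invoke Mathieu's theorem, which guarantees that the tensor product $\St_1 \otimes \nabla(0,2)$ has a good filtration. Combined with Theorem~\ref{t:p-filt}, this exhibits a module $M = \nabla(0,2)$ for which one direction of the equivalence in conjecture~A (at $r=1$) fails: the tensor product with $\St_1$ has a good filtration, but $M$ itself has no good $p$-filtration. That gives the desired counterexample.

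I do not expect any genuine obstacle here; the nontrivial content has already been extracted in Theorem~\ref{t:p-filt}. The only real point to verify is that the invocation of Mathieu's theorem is legitimate for this group and characteristic, which it is since the theorem holds in full generality for reductive groups in arbitrary characteristic. In the writeup I would cite \cite{Jan:book}*{II.4.21} (or the original reference) for the tensor product property, and simply assemble the pieces.
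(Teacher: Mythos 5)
Your argument is correct and follows essentially the same route as the paper: both invoke the Donkin--Mathieu good-filtration theorem to conclude $\St \otimes \nabla(0,2)$ has a good filtration, and then combine this with Theorem~\ref{t:p-filt} to contradict the ``if and only if'' in the $(p,r)$-filtration conjecture. The only cosmetic difference is that the paper cites \cite{Donkin} or \cite{Mathieu} directly, while you also suggest \cite{Jan:book}*{II.4.21}, which is fine.
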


\begin{proof}
By the general result in \cite{Donkin} or \cite{Mathieu}, $\St \otimes
\nabla(0,2)$ has a good filtration. But, according to
Theorem~\ref{t:p-filt}, $\nabla(0,2)$ fails to have a good
$p$-filtration.
\end{proof}

See the appendices for further computational results. For example, we
calculate the composition factors of the socle series of all the Weyl
modules of non-restricted highest weight in $\pi$. We also compute the
complete submodule lattice for the Weyl modules $\Delta(2,0)$,
$\Delta(0,2)$, and $\Delta(0,3)$ in Appendices~\ref{ss:Delta20},
\ref{ss:Delta02}, and \ref{ss:Delta03} respectively. This in turn
determines some extension groups between simple modules.

\section*{\bf APPENDICES}\noindent
The structure of the Weyl modules of restricted highest weight was
determined in \S\ref{s:restr} above.  In the following appendices, we
compile structural details about the $\Delta(\lambda)$ such that
$\lambda$ is a non-restricted weight in the set $\pi = \{ \lambda \in
X_+ \mid \lambda \le (2,2) \}$. In particular, we calculate:
\begin{itemize}
\item All the maximal vectors in $\Delta(\lambda)$.
\item The socle series of $\Delta(\lambda)$.
\item The character of the image of morphisms $\Delta(\mu) \to
  \Delta(\lambda)$ determined by maximal vectors of weight $\mu$ in
  $\Delta(\lambda)$.
\item The socle series of the cokernel of many of these morphisms.
\item Related structural information.
\end{itemize}
It is hoped that such information will be useful in determining the
structure of all the Weyl modules in the trivial block
$S(\pi_0)$. Ultimately, one would like to describe this algebra by
quiver and relations, as in Ringel's appendix to \cite{BDM1}. This
would settle an outstanding question about the algebra: what is the
structure of $T(2,2)$ as a $G_1$-module? It is clear from
Theorem~\ref{t:TMC} that $T(2,2)|_{G_1}$ contains at least $Q_1(0,0)
\oplus Q_1(0,1)$. Is it equal to that module, or is the containment
proper?


\appendix
\renewcommand{\thesection}{\Alph{section}} 

\section{\bf Structure of $\Delta(2,0)$}\label{ss:Delta20}\noindent 
The highest weights of the composition factors of the socle series
layers of $\Delta(2,0)$ are:
\[
\begin{array}{c|c}
\text{socle layer} & \text{highest weights}\\ \hline
3 & (2, 0)\\
2 & (0, 0)\\
1 & \phantom{.}(0, 1),\; (1, 0) .
\end{array}
\]
From now on, we will always describe a socle series by such a table.  This
Weyl module has three maximal vectors, as listed below:
\begin{align*}
m(2, 0) = v_0, \quad
m(0, 1) = y_1 v_0, \quad
m(1, 0) = y_4 v_0 .
\end{align*}
The notation $m(\lambda)$ here stands for a maximal vector of
weight $\lambda$. If there are two or more linearly independent
maximal vectors of weight $\lambda$ (the module is ambiguous), they
will be denoted by $m_1(\lambda)$, $m_2(\lambda)$, and so on.


\subsection{The morphism $\Delta(0,1) \to \Delta(2,0)$}
The image of the morphism sending the generator of $\Delta(0,1)$ onto
$m(0,1)$ is isomorphic to $L(0,1)$. The highest weights of the
composition factors of the socle series layers of the cokernel $Q'$ of
this morphism are:
\[
\begin{array}{c|c}
  \text{radical layer} & \text{highest weight}\\ \hline
  1 & (2,0) \\
  2 & (0,0)\\
  3 & \phantom{.}(1,0). 
\end{array}
\]
This is a uniserial module of length three. 

\subsection{The morphism $\Delta(1,0) \to \Delta(2,0)$}
The image of the morphism sending the generator of $\Delta(1,0)$ onto
$m(1,0)$ is isomorphic to $L(1,0)$.  The highest weights of the
composition factors of the socle series layers of the cokernel $Q$ of
this morphism are:
\[
\begin{array}{c|c}
\text{socle layer} & \text{highest weights}\\ \hline
2 & (2, 0)\\
1 & \phantom{.}(0, 1),\; (0, 0) .
\end{array}
\]
This structure shows that $L(0,1)$ and $L(0,0)$ are both top
factors of the radical of $\Delta(2,0)$.

\subsection{The module $Q'$}\label{ss:Q'}
The maximal vectors in the cokernel $Q$ are given below:
\begin{align*}
m(2, 0) = v_0, \quad
m(0, 1) = y_1 v_0, \quad
m(0, 0) = y_1 y_6 v_0+y_3 y_5 v_0+y_4^{(2)} v_0 .
\end{align*}
The pre-image of $m(0,0)$ in $\Delta(2,0)$ generates a submodule
$\gen{m(0,0)}$ realizing a non-split extension of $L(0,0)$ by
$L(1,0)$, which is isomorphic to $\nabla(1,0)$. In other words, a copy
of $\nabla(1,)$ embeds in $\Delta(2,0)$.  The corresponding quotient
$Q' = \Delta(2,0)/\gen{m(0,0)}$ is of length two, and realizes a
non-split extension of $L(2,0)$ by $L(0,1)$.

\subsection{Radical series and structure diagram}
It follows from the preceding subsection that the highest weights of
the composition factors of the radical series layers of $\Delta(2,0)$
must be of the form:
\[
\begin{array}{c|c}
  \text{radical layer} & \text{highest weight}\\ \hline
  1 & (2,0) \\
  2 & (0,1), \; (0,0) \\
  3 & \phantom{.}(1,0). 
\end{array}
\]
This demonstrates that the module $\Delta(2,0)$ is not rigid.

The structure diagram of $\Delta(2,0)$, in the sense of Alperin
diagrams (see \cite{Alperin}), has the form:
\[
\boxed{
\begin{tikzpicture}[scale = 0.9,thick, baseline={(0,-1ex/2)}]
  \node(a) at (0,0) {(2,0)};
  \node(b) at (-1,-1) {(0,1)};
  \node(c) at (1,-1) {(0,0)};
  \node(d) at (2,-2) {(1,0)};
  \draw (a)--(b); 
  \draw (a)--(c);
  \draw (c)--(d);
\end{tikzpicture}
}
\]
where the nodes $\mu$ are labeled by the highest weights of simple
composition factors $L(\mu)$, and edges depict non-split extensions.
From this we deduce that
\begin{equation}
  \Ext^1_G(L(2,0),L(0,1)) \cong \Bbbk \cong \Ext^1_G(L(2,0),L(0,0))
\end{equation}
and, furthermore, there are no other non-split extensions of $L(2,0)$
by a simple $L(\mu)$ of highest weight $\mu \le (2,0)$.

\subsection{Remark}\label{r:Q'}
The module $Q'$ constructed in \S\ref{ss:Q'} above is isomorphic to
the contravariant dual of a module, denoted by $M$ in
\cite{BNPS:counter} and constructed very differently, which plays an
important role in the calculations there.  We note that $M={}^\tau Q'$
does not have a good $p$-filtration, in the sense of
\S\ref{ss:filt}. Indeed, if it did then necessarily its socle $L(2,0)$
would have to be of the form $L(\lambda_0) \otimes
\nabla(\lambda_1)^{(1)}$, for some $\lambda = \lambda_0 + p
\lambda_1$, where $\lambda_0 \in X_1$. But $L(1,0) \not\cong
\nabla(1,0)$ implies that $L(2,0) \cong L(1,0)^{(1)}$ is not
isomorphic to $\nabla(1,0)^{(1)}$. The authors of the cited paper
prove that $\St \otimes M$ has a good filtration. It follows (as they
observe) that $M$ is a counterexample to the good $(p,r)$-filtration
conjecture stated in \S\ref{s:Intro}.

\section{\bf Structure of $\Delta(3,0)$}\label{ss:Delta30}\noindent
This interesting Weyl module illustrates many complexities.  We begin
by computing the highest weights of the composition factors of its
socle series layers:
\[
\begin{array}{c|c}
\text{socle layer} & \text{highest weights}\\ \hline
6 & (3, 0)\\
5 & (1, 0)\\
4 & (0, 0)\\
3 & (2, 0)\\
2 & (2, 0),\; (0, 0)\\
1 & \phantom{.}(0, 1),\; (1, 0),\; (0, 0) .
\end{array}
\]
The maximal vectors in $\Delta(3,0)$ are listed below:
\begin{gather*}
m(3, 0) = v_0, \quad
m_1(2, 0) = y_4 v_0, \quad
m_2(2, 0) = y_1 y_3 v_0, \quad
m(0, 1) = y_1 y_4 v_0, \\
m(1, 0) = y_1 y_3 y_4 v_0+y_1 y_6 v_0+y_3 y_5 v_0, \\
m(0, 0) = y_1 y_4 y_6 v_0+y_3 y_4 y_5 v_0+y_4^{(3)} v_0 .
\end{gather*}
There are two linearly independent maximal vectors of weight $(2,0)$,
so $\Delta(3,0)$ is ambiguous, in the sense defined in
\S\ref{ss:ambig}.

\subsection{The morphism $\varphi_1: \Delta(2,0) \to \Delta(3,0)$}
For $j=1,2$ let $\varphi_j$ be the morphism $\Delta(2,0) \to
\Delta(3,0)$ defined by sending the generator of $\Delta(2,0)$ onto
$m_j(2,0)$.  The character of the image of $\varphi_1$ is
$\ell(2,0)+\ell(0,1)+\ell(0,0)$.  By Appendix~\ref{ss:Delta20}, this
determines the structure of $\im(\varphi_1)$, as it is isomorphic with
a quotient of $\Delta(2,0)$.  The highest weights of the composition
factors of the socle series layers of the cokernel of $\varphi_1$
(isomorphic to $Q_1 = \Delta(3,0)/\im(\varphi_1)$) are:
\[
\begin{array}{c|c}
\text{socle layer} & \text{highest weights}\\ \hline
6 & (3, 0)\\
5 & (1, 0)\\
4 & (0, 0)\\
3 & (2, 0)\\
2 & (0, 0)\\
1 & \phantom{.}(1, 0).
\end{array}
\]
The quotient $Q_1$ is uniserial. We will see in Appendix~\ref{ss:Delta02}
below that it appears as a homomorphic image of the (unique up to
scalar multiple) morphism from $\Delta(3,0)$ into $\Delta(0,2)$.

\subsection{The morphism $\varphi_2: \Delta(2,0) \to \Delta(3,0)$}
The character of the image of $\varphi_2$ is
$\ell(2,0)+\ell(1,0)+\ell(0,0)$. By Appendix~\ref{ss:Delta20}, this
determines its structure.  The highest weights of the composition
factors of the socle series layers of the cokernel of $\varphi_2$ are:
\[
\begin{array}{c|c}
\text{socle layer} & \text{highest weights}\\ \hline
5 & (3, 0)\\
4 & (1, 0)\\
3 & (0, 0)\\
2 & (2, 0)\\
1 & \phantom{.}(0, 1),\; (0, 0).
\end{array}
\]

\subsection{The morphism $\varphi_1+\varphi_2: \Delta(2,0) \to \Delta(3,0)$}
The character of the image of $\varphi_1+\varphi_2$ is
$\ell(2,0)+\ell(1,0)+\ell(0,1)+\ell(0,0)$. It follows that
$\im(\varphi_1+\varphi_2)$ is isomorphic to $\Delta(2,0)$; that is, an
isomorphic copy of $\Delta(2,0)$ embeds in $\Delta(3,0)$.
%
%
The highest weights of the composition factors of the socle series
layers of the cokernel $Q_3$ of $\varphi_1+\varphi_2$ give a very
different picture:
\[
\begin{array}{c|c}
\text{socle layer} & \text{highest weights}\\ \hline
3 & (3, 0)\\
2 & (2, 0),\; (1, 0)\\
1 & \phantom{.}(0, 0),\; (0, 0).
\end{array}
\]
This quotient is an ambiguous module.  Its socle is isomorphic to the
direct sum of two copies of the trivial module. We record the fact
that
\begin{equation}
\dim_\Bbbk \Hom(\Bbbk, Q_3) = \dim_\Bbbk \Hom(\Delta(0,0), Q_3) = 2.
\end{equation}
The structure of the cokernel $Q_3$ suggests that a copy of $L(2,0)$
appears as a top factor of $\rad \Delta(3,0)$, which would imply that
\begin{equation}
\Ext_G^1(L(3,0),L(2,0)) \ne 0.
\end{equation}
We have verified the truth of this suspicion as follows.  First, we
computed the maximal vectors in $Q_3$, which are given by:
\begin{align*}
v_0,\;
y_1 y_3 v_0, \;
y_1 y_6 v_0+y_4^{(2)} v_0, \;
y_4^{(3)} v_0,\; y_5 y_6 v_0 
\end{align*}
of respective weights $(3,0)$, $(2,0)$, $(1,0)$, $(0,0)$, $(0,0)$.
Then we constructed the quotient $\Delta(3,0)/S$, where
\[
  S = \gen{m_1(2,0)+m_2(2,0),y_1 y_6 v_0+y_4^{(2)} v_0,
    y_4^{(3)} v_0, y_5 y_6 v_0}
\]
is the submodule generated by $m_1(2,0)+m_2(2,0)$ along with the
preimages of the last three maximal vectors computed above.  It turns
out that $\Delta(3,0)/S$ is uniserial, with two composition factors
$L(3,0)$ and $L(2,0)$, justifying the claim. A similar quotient
verifies that $L(1,0)$ appears as a top factor of $\rad \Delta(3,0)$,
and hence that
\begin{equation}
\Ext_G^1(L(3,0),L(1,0)) \ne 0.
\end{equation}
Another way of constructing such an extension is by taking the
quotient by the fourth socle. At this point we have proved that
$\Delta(3,0)$ is not rigid, as it has at least two composition factors
in the top of its radical.

\subsection{The morphism $\Delta(0,1) \to \Delta(3,0)$}
The image of the morphism sending the generator of $\Delta(0,1)$ onto
$m(0,1)$ is isomorphic to $L(0,1) = \Delta(0,1)$. The highest weights
of the socle series layers of the cokernel of this morphism are:
\[
\begin{array}{c|c}
\text{socle layer} & \text{highest weights}\\ \hline
6 & (3, 0)\\
5 & (1, 0)\\
4 & (0, 0)\\
3 & (2, 0)\\
2 & (2, 0),\; (0, 0)\\
1 & \phantom{.}(1, 0),\; (0, 0).
\end{array}
\]
This is what would have been predicted by deleting the factor of
highest weight $(0,1)$ from the socle series layers of $\Delta(3,0)$.

\subsection{The morphism $\Delta(1,0) \to \Delta(3,0)$}
The image of the morphism sending the generator of $\Delta(1,0)$ onto
$m(1,0)$ is isomorphic to $L(1,0)$. The highest weights
of the socle series layers of the cokernel of this morphism are:
\[
\begin{array}{c|c}
\text{socle layer} & \text{highest weights}\\ \hline
5 & (3, 0)\\
4 & (1, 0)\\
3 & (0, 0)\\
2 & (2, 0),\; (2, 0)\\
1 & \phantom{.}(0, 1),\; (0, 0),\; (0, 0).
\end{array}
\]
This is slightly surprising, as we picked up a second factor of $L(0,0)$ in
the socle.

\subsection{The morphism $\Delta(0,0) \to \Delta(3,0)$}
The image of the morphism sending the generator of $\Delta(0,0) \cong
\Bbbk$ onto $m(0,0)$ is isomorphic to $L(0,0) \cong \Bbbk$. The
highest weights of the socle series layers of the cokernel of this
morphism are:
\[
\begin{array}{c|c}
\text{socle layer} & \text{highest weights}\\ \hline
6 & (3, 0)\\
5 & (1, 0)\\
4 & (0, 0)\\
3 & (2, 0)\\
2 & (2, 0),\; (0, 0)\\
1 & \phantom{.}(0, 1),\; (1, 0).
\end{array}
\]
This is again compatible with deleting $L(0,0)$ from the socle layer
in the socle series of $\Delta(3,0)$.

Next, we examine some other interesting quotients of $\Delta(3,0)$. 

\subsection{The quotient $Q_4$}
The module $Q_4 = \Delta(3,0)/\gen{m(0,1), m(1,0)}$ is another
interesting quotient. The highest weights of the composition factors
of the socle series layers are:
\[
\begin{array}{c|c}
\text{socle layer} & \text{highest weights}\\ \hline
5 & (3, 0)\\
4 & (1, 0)\\
3 & (0, 0)\\
2 & (2, 0),\; (2, 0)\\
1 & \phantom{.}(0, 0),\; (0, 0) .
\end{array}
\]
From this we deduce that
\begin{equation}
  \dim_\Bbbk \Hom_G(\Delta(1,0)^{(1)}, Q_4) = 2.
\end{equation}
More precisely, there is an embedding of a direct sum of two copies of
$\Delta(1,0)^{(1)}$ in $Q_4$, and $\soc_G Q_4 \cong L(0,0) \oplus L(0,0)$.
This is yet another ambiguous module.

\subsection{The quotient $Q_5$}
The module $Q_5 = \Delta(3,0)/\gen{m(1,0), m(0,0)}$ is also
interesting.  The highest weights of the composition factors of the
socle series layers for $Q_5$ are:
\[
\begin{array}{c|c}
\text{socle layer} & \text{highest weights}\\ \hline
5 & (3, 0)\\
4 & (1, 0)\\
3 & (0, 0)\\
2 & (2, 0),\; (2, 0)\\
1 & \phantom{.}(0, 1),\; (0, 0) .
\end{array}
\]

These calculations reveal various aspects of the structure of
$\Delta(3,0)$, but a complete understanding of the submodule structure
of this module seems rather elusive at this point.

\section{\bf Structure of $\Delta(0,2)$}\label{ss:Delta02}\noindent
The highest weights of the composition factors of the socle series
layers of this Weyl module are given by:
\[
\begin{array}{c|c}
\text{socle layer} & \text{highest weights}\\ \hline
6 & (0, 2)\\
5 & (3, 0)\\
4 & (1, 0)\\
3 & (0, 0)\\
2 & (2, 0)\\
1 & \phantom{.}(0, 1) .
\end{array}
\]
This shows that $\Delta(0,2)$ is uniserial. According to
\cite{BNPS:counter}, this independently verifies an observation of
H.H.~Andersen. The maximal vectors in $\Delta(0,2)$ are:
\begin{gather*}
m(0, 2) = v_0,\; m(3, 0) = y_2 v_0,\\  m(2, 0) = y_2 y_4 v_0,\;
m(0, 1) = y_2 y_5 v_0+y_3 y_4 v_0 .
\end{gather*}
The module $\Delta(0,2)$ is rigid and unambiguous, and has the obvious
linear Alperin diagram corresponding to its socle series layers.

\subsection{The morphisms $\Delta(\lambda) \to \Delta(0,2)$ for
  $\lambda = (3,0)$, $(2,0)$, and $(0,1)$ }
In each case, the image of the morphism sending the generator of
$\Delta(\lambda)$ onto $m(\lambda)$ is uniserial. Its character can be
read off from the above socle series. The cokernel of each morphism is
also uniserial, and its structure can also be read off from the above
socle series. We note that when $\lambda=(2,0)$, the image of the
morphism is isomorphic to the module ${}^\tau M$ discussed in
Remark~\ref{r:Q'}.

\section{\bf Structure of $\Delta(2,1)$}\label{ss:Delta21}\noindent
The highest weights of the composition factors of the socle series
layers of $\Delta(2,1)$ are given by:
\[
\begin{array}{c|c}
\text{socle layer} & \text{highest weights}\\ \hline
10 & (2, 1)\\
9 & (0, 1),\; (0, 0)\\
8 & (2, 0)\\
7 & (0, 0)\\
6 & (0, 2),\; (1, 0)\\
5 & (3, 0)\\
4 & (1, 0)\\
3 & (0, 0)\\
2 & (2, 0)\\
1 & \phantom{.}(0, 1).
\end{array}
\]
The maximal vectors in $\Delta(2,1)$ are listed below:
\begin{gather*}
m(2, 1) = v_0, \quad
m(0, 2) = y_1 v_0, \quad
m(3, 0) = y_1 y_2 v_0+y_3 v_0, \\
m(2, 0) = y_1 y_2 y_4 v_0+y_2 y_5 v_0+y_3 y_4 v_0, \quad
m(0, 1) = y_1 y_2 y_5 v_0+y_1 y_3 y_4 v_0 .
\end{gather*}
This Weyl module is not ambiguous.

\subsection{The morphism $\Delta(0,2) \to \Delta(2,1)$}
The image of the morphism sending the generator of $\Delta(0,2)$ onto
$m(0,2)$ has character $\ell(0, 2) + \ell(3, 0) + \ell(2, 0) + \ell(0,
1) + \ell(1, 0) + \ell(0, 0)$. It follows from the structure of
$\Delta(0,2)$ that the image of this morphism is isomorphic to
$\Delta(0,2)$. In other words, a copy of $\Delta(0,2)$ embeds in
$\Delta(2,1)$.  The highest weights of the composition factors of the
socle series layers for the cokernel of this morphism are:
\[
\begin{array}{c|c}
\text{socle layer} & \text{highest weights}\\ \hline
5 & (2, 1)\\
4 & (0, 1),\; (0, 0)\\
3 & (2, 0)\\
2 & (0, 0)\\
1 & \phantom{.}(1, 0).
\end{array}
\]

\subsection{The morphism $\Delta(3,0) \to \Delta(2,1)$}
The image of the morphism sending the generator of $\Delta(3,0)$ onto
$m(3,0)$ has character $\ell(3, 0) + \ell(2, 0) + \ell(0, 1) + \ell(1,
0) + \ell(0, 0)$.  This image is a uniserial module. The highest
weights of the composition factors of the socle series layers of the
cokernel of this morphism are:
\[
\begin{array}{c|c}
\text{socle layer} & \text{highest weights}\\ \hline
5 & (2, 1)\\
4 & (0, 1),\; (0, 0)\\
3 & (2, 0)\\
2 & (0, 0)\\
1 & \phantom{.}(0, 2),\; (1, 0).
\end{array}
\]

\subsection{The morphism $\Delta(2,0) \to \Delta(2,1)$}
The image of the morphism sending the generator of $\Delta(2,0)$ onto
$m(2,0)$ has character $\ell(2, 0) + \ell(0, 1)$. This image is
isomorphic to the module ${}^\tau M$ which was discussed in
Remark~\ref{r:Q'}.  The highest weights of the composition
factors of the socle series layers of the cokernel of this morphism
are:
\[
\begin{array}{c|c}
\text{socle layer} & \text{highest weights}\\ \hline
8 & (2, 1)\\
7 & (0, 1),\; (0, 0)\\
6 & (2, 0)\\
5 & (0, 0)\\
4 & (0, 2),\; (1, 0)\\
3 & (3, 0)\\
2 & (1, 0)\\
1 & \phantom{.}(0, 0).
\end{array}
\]

\subsection{The morphism $\Delta(0,1) \to \Delta(2,1)$}
The image of the morphism sending the generator of $\Delta(0,1)$ onto
$m(0,1)$ is isomorphic to $L(0,1)$. The socle series layers for its
cokernel can be deduced from those for $\Delta(2,1)$ itself.

\subsection{The radical of $\Delta(2,1)$}
The module $\Delta(2,1)$ contains a primitive vector
\[
z = y_1y_2y_5v_0 + y_4^{(2)}v_0
\]
of weight $(0,1)$ that generates the simple module
$L(0,1)$ in the quotient of $\Delta(2,1)$ by its eighth socle.
Consider the quotient $Q_1 = \Delta(2,1)/\gen{z}$ by the submodule
generated by the primitive vector $z$. The module $Q_1$ has three
composition factors and three maximal vectors, and has an Alperin
diagram of the form
\[
\boxed{
\begin{tikzpicture}[scale = 1,thick, baseline={(0,-1ex/2)}]
  \node(a) at (0,0) {(2,1)};
  \node(b) at (-1,-1) {(0,2)};
  \node(c) at (1,-1) {(0,0)};
  \draw (a)--(b); 
  \draw (a)--(c);
\end{tikzpicture}
}
\]
which shows in particular that $L(0,2)$ is a top composition factor in
the radical of $\Delta(2,1)$. We conclude that $\Delta(2,1)$ is not
rigid. Furthermore, we have
\begin{equation}
\Ext_G^1(L(2,1),L(0,2)) \ne 0 \quad \text{and} \quad
\Ext_G^1(L(2,1),L(0,0)) \ne 0.
\end{equation}

There is another primitive vector $z' = y_1y_2y_5y_6v_0$ generating a
top composition factor isomorphic to $L(0,0)$ in the ninth socle of
$\Delta(2,1)$.  The corresponding quotient $Q_2 =
\Delta(2,1)/\gen{z'}$ is a highest weight module with two composition
factors, $L(2,1)$ and $L(0,1)$, which shows that
\begin{equation}
\Ext_G^1(L(2,1),L(0,1)) \ne 0.
\end{equation}
This proves that the second radical layer of $\Delta(2,1)$ has at
least three composition factors.

\section{\bf Structure of $\Delta(4,0)$}\label{ss:Delta40}\noindent
The socle series layers of this Weyl module have composition factors
of the following highest weights:
\[
\begin{array}{c|c}
\text{socle layer} & \text{highest weights}\\ \hline
9 & (4, 0)\\
8 & (0, 0)\\
7 & (0, 2),\; (2, 0)\\
6 & (3, 0)\\
5 & (2, 1),\; (1, 0)\\
4 & (0, 1),\; (0, 0)\\
3 & (2, 0)\\
2 & (0, 0)\\
1 & \phantom{.}(1, 0),\; (0, 0).
\end{array}
\]
The maximal vectors in $\Delta(4,0)$ are listed below:
\begin{gather*}
m(4, 0) = v_0, \quad
m(2, 1) = y_1 v_0, \quad
m(3, 0) = y_4 v_0, \quad 
m(2, 0) = y_1 y_3 y_4 v_0, \\
m(1, 0) = y_1 y_4 y_6 v_0+y_3 y_4 y_5 v_0, \\
m(0, 0) = y_1 y_3 y_5 y_6 v_0+y_1^{(2)} y_3 y_4 y_6 v_0+y_4 y_5 y_6 v_0.
\end{gather*}
This Weyl module is not ambiguous, in contrast to $\Delta(3,0)$.

\subsection{The morphism $\Delta(3,0) \to \Delta(4,0)$}
The character of the image of the morphism sending the generator of
$\Delta(3,0)$ onto $m(3,0)$ is
$\ell(3,0)+\ell(2,0)+2\ell(1,0)+2\ell(0,0)$.  The highest weights of
the composition factors of the socle series layers of its cokernel
are:
\[
\begin{array}{c|c}
\text{socle layer} & \text{highest weights}\\ \hline
4 & {(4, 0)}\\
3 & {(0, 2)}, \; {(0, 0)}\\
2 & {(2, 1)}, \; {(2, 0)}\\
1 & \phantom{.}{(0, 1)}, \; {(0, 0)} .
\end{array}
\]
The structure of this quotient suggests the possibility of a non-split
extension of $L(4,0)$ by $L(0,2)$. We will return to this question a
bit later.

\subsection{The morphism $\Delta(2,1) \to \Delta(4,0)$}
The character of the image of the morphism sending the generator of
$\Delta(2,1)$ to $m(2,1)$ is
$\ell(2,1)+\ell(2,0)+\ell(0,1)+\ell(1,0)+2\ell(0,0)$.  The highest
weights of the composition factors of the socle series layers of the
cokernel of this morphism are:
\[
\begin{array}{c|c}
\text{socle layer} & \text{highest weights}\\ \hline
6 & {(4, 0)}\\
5 & {(0, 0)}\\
4 & {(0, 2)}, \; {(2, 0)}\\
3 & {(3, 0)}\\
2 & {(1, 0)}\\
1 & \phantom{.}{(0, 0)} .
\end{array}
\]

\subsection{The morphism $\Delta(2,0) \to \Delta(4,0)$}
The character of the image of the morphism sending the generator of
$\Delta(2,0)$ onto $m(2,)$ is $\ell(2,0)+\ell(1,0)+\ell(0,0)$. This of
course determines the structure of this image. The highest weights of
the composition factors of the socle series layers of the cokernel of
this morphism are:
\[
\begin{array}{c|c}
\text{socle layer} & \text{highest weights}\\ \hline
6 & {(4, 0)}\\
5 & {(0, 0)}\\
4 & {(0, 2)}, \; {(2, 0)}\\
3 & {(3, 0)}\\
2 & {(2, 1)}, \; {(1, 0)}\\
1 & \phantom{.}{(0, 1)}, \; {(0, 0)}, \; {(0, 0)} .
\end{array}
\]
This quotient is an ambiguous module, since its socle contains a
direct sum of two copies of the trivial module $L(0,0) \cong \Bbbk$.

\subsection{The morphisms $\Delta(1,0) \to \Delta(4,0)$,
$\Delta(0,0) \to \Delta(4,0)$} There are also morphisms defined by
sending the generators of $\Delta(1,0)$ and $\Delta(0,0)$ respectively
to $m(1,0)$ and $m(0,0)$. The images of these morphisms are simple
modules.  We omit giving the socle series layers of their cokernels.

\subsection{Top factors of the radical}
Now we return to the issue of extensions of $L(4,0)$. We have
constructed quotients of $\Delta(4,0)$ of length two which verify that
\begin{equation}
  \Ext_G^1(L(4,0),L(0,2)) \ne 0 \quad\text{and}\quad
  \Ext_G^1(L(4,0),L(0,0)) \ne 0. 
\end{equation}
This information shows that $\Delta(4,0)$ is not rigid.

\section{\bf Structure of $\Delta(1,2)$}\noindent\label{ssDelta12}
The highest weights of the simple factors of the socle series layers
of this module are given by:
\[
\begin{array}{c|c}
\text{socle layer} & \text{highest weights}\\ \hline
12 & (1, 2)\\
11 & (2, 0)\\
10 & (0, 2),\; (0, 0)\\
9 & (4, 0)\\
8 & (0, 0)\\
7 & (0, 2),\; (2, 0)\\
6 & (3, 0)\\
5 & (2, 1),\; (1, 0)\\
4 & (0, 1),\; (0, 0)\\
3 & (2, 0)\\
2 & (0, 0)\\
1 & \phantom{.}(1, 0).
\end{array}
\]
The maximal vectors in $\Delta(1,2)$ are:
\begin{gather*}
m(1, 2) = v_0, \quad
m(4, 0) = y_2 v_0, \quad
m(2, 1) = y_1 y_2 v_0, \\
m(3, 0) = y_2 y_4 v_0, \quad 
m(2, 0) = y_1 y_2 y_3 y_4 v_0, \\
m(1, 0) = y_1 y_2 y_4 y_6 v_0+y_2 y_3 y_4 y_5 v_0+y_3 y_4 y_6 v_0 .
\end{gather*}
This module is not ambiguous.  It is also not rigid. There is an
extension of $L(1,2)$ by $L(0,2)$, in addition to the extension of
$L(1,2)$ by $L(2,0)$. To prove this, look at the quotient by the
submodule generated by the generator of the $11$th socle layer.

\subsection{The morphism $\Delta(4,0) \to \Delta(1,2)$}
The character of the image of the morphism sending the generator of
$\Delta(4,0)$ onto $m(4,0)$ is $\ell(4, 0) + \ell(2, 1) + \ell(0, 2) +
\ell(3, 0) + 2\ell(2, 0) + \ell(0, 1) + 2\ell(1, 0) + 3\ell(0, 0)$.
The highest weights of the composition factors of the socle series
layers of the cokernel of this morphism are:
\[
\begin{array}{c|c}
\text{socle layer} & \text{highest weights}\\ \hline
3 & (1, 2)\\
2 & (2, 0)\\
1 & \phantom{.}(0, 2),\; (0, 0).
\end{array}
\]

\subsection{The morphism $\Delta(2,1) \to \Delta(1,2)$}
The character of the image of the morphism sending the generator of
$\Delta(2,1)$ onto $m(2,1)$ is $\ell(2, 1) + \ell(2, 0) + \ell(0, 1) +
\ell(1, 0) + 2\ell(0, 0)$. The highest weights of the composition
factors of the socle series layers of the cokernel of this morphism
are:
\[
\begin{array}{c|c}
\text{socle layer} & \text{highest weights}\\ \hline
8 & (1, 2)\\
7 & (2, 0)\\
6 & (0, 2),\; (0, 0)\\
5 & (4, 0)\\
4 & (0, 0)\\
3 & (0, 2),\; (2, 0)\\
2 & (3, 0)\\
1 & \phantom{.}(1, 0).
\end{array}
\]

\subsection{The morphism $\Delta(3,0) \to \Delta(1,2)$}
The character of the image of the morphism sending the generator of
$\Delta(3,0)$ onto $m(3,0)$ is $\ell(3, 0) + \ell(2, 0) + 2\ell(1, 0)
+ 2\ell(0, 0)$.  The highest weights of the composition factors of the
socle series layers of the cokernel of this morphism are:
\[
\begin{array}{c|c}
\text{socle layer} & \text{highest weights}\\ \hline
7 & (1, 2)\\
6 & (2, 0)\\
5 & (0, 2),\; (0, 0)\\
4 & (4, 0)\\
3 & (0, 2),\; (0, 0)\\
2 & (2, 1),\; (2, 0)\\
1 & \phantom{.}(0, 1).
\end{array}
\]

\subsection{The morphism $\Delta(2,0) \to \Delta(1,2)$}%
\label{ss:20to12}  
The character of the image of the morphism sending the generator of
$\Delta(2,0)$ onto $m(2,0)$ is $\ell(2, 0) + \ell(1, 0) + \ell(0, 0)$.
The highest weights of the composition factors of the socle series
layers of the cokernel of this morphism are:
\[
\begin{array}{c|c}
\text{socle layer} & \text{highest weights}\\ \hline
9 & (1, 2)\\
8 & (2, 0)\\
7 & (0, 2),\; (0, 0)\\
6 & (4, 0)\\
5 & (0, 0)\\
4 & (0, 2),\; (2, 0)\\
3 & (3, 0)\\
2 & (2, 1),\; (1, 0)\\
1 & \phantom{.}(0, 1),\; (0, 0).
\end{array}
\]

\subsection{The morphism $\Delta(1,0) \to \Delta(1,2)$}
There is also a morphism mapping the generator of $\Delta(1,0)$ onto
$m(1,0)$, with simple image isomorphic to $L(1,0)$. We omit the socle
series layers for the cokernel of this morphism.


\section{\bf Structure of $\Delta(3,1)$}\noindent
The module $\Delta(3,1)$ has just two composition factors. The highest
weights of its socle layers are:
\[
\begin{array}{c|c}
\text{socle layer} & \text{highest weights}\\ \hline
2 & (3, 1)\\
1 & \phantom{.}(1, 1).
\end{array}
\]
The maximal vectors in $\Delta(3,1)$ are:
\begin{align*}
m(3, 1) &= v_0\\
m(1, 1) &= y_1 y_2 y_5 v_0+y_1 y_3 y_4 v_0
         + y_1 y_6 v_0+y_3 y_5 v_0+y_4^{(2)} v_0.
\end{align*}
The module is uniserial. We have $\St = L(1,1) = \Delta(1,1) =
T(1,1)$. Furthermore, $\Delta(3,1) \cong \St \otimes
\Delta(1,0)^{(1)}$. The tilting module $T(3,1)$ is uniserial length
three, with character $\ell(3,1)+2\ell(1,1)$, and head and socle
isomorphic to $L(1,1) = \St$.


\section{\bf Structure of $\Delta(0,3)$}\label{ss:Delta03}\noindent
The highest weights of the composition factors of the socle series
layers of $\Delta(0, 3)$ are:
\[
\begin{array}{c|c}
\text{socle layer} & \text{highest weights}\\ \hline
9 & (0, 3)\\
8 & (2, 0)\\
7 & (0, 0)\\
6 & (4, 0)\\
5 & (0, 0)\\
4 & (0, 2),\; (2, 0)\\
3 & (3, 0)\\
2 & (1, 0)\\
1 & \phantom{.}(0, 0).
\end{array}
\]
Its maximal vectors are:
\begin{gather*}
m(0, 3) = v_0, \quad 
m(4, 0) = y_2 y_3 v_0, \quad
m(0, 2) = y_2 y_5 v_0+y_3 y_4 v_0+y_6 v_0,\\
m(3, 0) = y_2 y_3 y_4 v_0+y_2 y_6 v_0, \quad
m(1, 0) = y_2 y_3^{(2)} y_4 y_5 v_0,\\
m(0, 0) = y_2 y_3 y_4 y_5 y_6 v_0 .
\end{gather*}
One can read off the image of the nonzero intertwining morphisms
$\Delta(\lambda) \to \Delta(0,3)$, where $\lambda$ is the weight of a
maximal vector, from the Alperin diagram displayed below. 

\subsection{Structure diagram of $\Delta(0,3)$}
The module $\Delta(0,3)$ is not ambiguous.  By constructing
submodules, we have determined that every primitive vector in the
socle series generates a submodule whose composition factors consist
of all those below it. Thus, this Weyl module is rigid and its
structure is determined by its Alperin diagram
\[
\boxed{
\begin{tikzpicture}[scale = 0.8,thin, baseline={(0,-1ex/2)}]
  \node(a) at (0,0) {(0,3)};
  \node(b) at (0,-1) {(2,0)};
  \node(c) at (0,-2) {(0,0)};
  \node(d) at (0,-3) {(4,0)};
  \node(e) at (0,-4) {(0,0)};
  \node(f) at (-1,-5) {(0,2)};  \node(g) at (1,-5) {(2,0)};
  \node(h) at (0,-6) {(3,0)};
  \node(i) at (0,-7) {(1,0)};
  \node(j) at (0,-8) {(0,0)};
  \draw (a)--(b); 
  \draw (b)--(c);
  \draw (c)--(d);
  \draw (d)--(e);
  \draw (e)--(f); \draw (e)--(g);
  \draw (f)--(h); \draw (g)--(h);
  \draw (h)--(i);
  \draw (i)--(j);
\end{tikzpicture}
}
\]
The submodule generated by $m(4,0)$, which is the image of the
morphism sending the generator of $\Delta(4,0)$ onto $m(4,0)$,
provides another piece of information about $\Delta(4,0)$. This
analysis also shows that
\begin{equation}
  \Ext_G^1(L(0,3),L(2,0)) \cong \Bbbk
\end{equation}
and there are no other non-split extensions of $L(0,3)$.


\section{\bf Structure of $\Delta(5,0)$}\label{ss:Delta50}\noindent
The highest weights of the composition factors of the socle series
layers of $\Delta(5,0)$ are described by:
\[
\begin{array}{c|c}
\text{socle layer} & \text{highest weights}\\ \hline
11 & (5, 0)\\
10 & (1, 0)\\
9 & (1, 2),\; (3, 0)\\
8 & (2, 0)\\
7 & (0, 2),\; (0, 0)\\
6 & (4, 0)\\
5 & (0, 0)\\
4 & (4, 0),\; (0, 2),\; (2, 0)\\
3 & (0, 2),\; (3, 0),\; (0, 0)\\
2 & (2, 1),\; (2, 0),\; (1, 0)\\
1 & \phantom{.}(0, 1),\; (0, 0) .
\end{array}
\]
The maximal vectors in $\Delta(5,0)$ are listed below:
\begin{gather*}
m(5, 0) = v_0, \quad
m(1, 2) = y_1^{(2)} v_0, \quad
m_1(4, 0) = y_1 y_3 v_0, \quad
m_2(4, 0) = y_4 v_0, \\
m(2, 1) = y_1 y_4 v_0, \quad 
m(0, 2) = y_1^{(3)} y_3 v_0, \\
m(3, 0) = y_1 y_3 y_4 v_0+y_1 y_6 v_0+y_3 y_5 v_0, \\
m(2, 0) = y_1 y_4 y_6 v_0+y_3 y_4 y_5 v_0+y_4^{(3)} v_0, \\
m(0, 1) = y_1 y_3 y_4 y_5 v_0+y_1 y_4^{(3)} v_0, \\
m(1, 0) = y_1 y_3 y_4^{(3)} v_0+y_1 y_3 y_5 y_6 v_0+y_1 y_4^{(2)} y_6 v_0+y_3
 y_4^{(2)} y_5 v_0+y_4 y_5 y_6 v_0, \\
m(0, 0) = y_1 y_3 y_4 y_5 y_6 v_0 .
\end{gather*}
The module $\Delta(5,0)$ is ambiguous, as it has two linearly independent
maximal vectors of weight $(4,0)$.

\subsection{Top composition factors of the radical}
The vector $z = y_4^{(4)} v_0$ is a primitive vector of weight $(1,0)$
in $\Delta(5,0)$. It generates the top composition factor of the
ninth socle.  The quotient $\Delta(5,0)/\gen{z}$ by the submodule
generated by $z$ is a highest weight module with two composition
factors, namely $L(5,0)$ and $L(4,0)$. Thus we have
\begin{equation}
  \dim_\Bbbk\Ext_G^1(L(5,0),L(4,0)) \ne 0. 
\end{equation}
This shows immediately that $\Delta(5,0)$ is not rigid. We also have that
\begin{equation}
  \dim_\Bbbk\Ext_G^1(L(5,0),L(1,0)) \ne 0. 
\end{equation}
Indeed, a non-zero element of this space is obtained by constructing
the quotient $\Delta(5,0)$ by its ninth socle.

\subsection{The morphism $\Delta(1,2) \to \Delta(5,0)$}
The character of the image of the morphism sending the generator of
$\Delta(1,2)$ to $m(1,2)$ is $\ell(1, 2) + \ell(4, 0) + \ell(2, 1) +
2\ell(0, 2) + \ell(3, 0) + 2\ell(2, 0) + \ell(0, 1) + \ell(1, 0) +
3\ell(0, 0)$.  This image is isomorphic to the quotient of
$\Delta(1,2)$ by its third socle.  The highest weights of the composition factors of the socle series layers of the
cokernel of the morphism are:
\[
\begin{array}{c|c}
\text{socle layer} & \text{highest weights}\\ \hline
4 & (5, 0)\\
3 & (4, 0),\; (1, 0)\\
2 & (3, 0),\; (0, 0)\\
1 & \phantom{.}(0, 2),\; (2, 0).
\end{array}
\]

\subsection{The morphism $\varphi_1: \Delta(4,0) \to \Delta(5,0)$}
Let $\varphi_j$ be the morphism sending the generator of $\Delta(4,0)$
to the maximal vector $m_j(4,0)$, for $j=1,2$. The image of
$\varphi_1$ has character $\ell(4, 0) + \ell(0, 2) + \ell(3, 0) +
\ell(2, 0) + \ell(1, 0) + 2\ell(0, 0)$.  The socle layers of the
cokernel of $\varphi_1$ are:
\[
\begin{array}{c|c}
\text{socle layer} & \text{highest weights}\\ \hline
9 & (5, 0)\\
8 & (1, 0)\\
7 & (1, 2),\; (3, 0)\\
6 & (2, 0)\\
5 & (0, 2),\; (0, 0)\\
4 & (4, 0)\\
3 & (0, 2),\; (0, 0)\\
2 & (2, 1),\; (2, 0)\\
1 & \phantom{.}(0, 1).
\end{array}
\]

\subsection{The morphism $\varphi_2: \Delta(4,0) \to \Delta(5,0)$}
The image of $\varphi_2$ has character $\ell(4, 0) + \ell(2, 1) +
\ell(0, 2) + \ell(2, 0) + \ell(0, 1) + 2\ell(0, 0)$. The socle layers
of its cokernel are given by:
\[
\begin{array}{c|c}
\text{socle layer} & \text{highest weights}\\ \hline
10 & (5, 0)\\
9 & (1, 0)\\
8 & (1, 2),\; (3, 0)\\
7 & (2, 0)\\
6 & (0, 2),\; (0, 0)\\
5 & (4, 0)\\
4 & (0, 0)\\
3 & (0, 2),\; (2, 0)\\
2 & (3, 0)\\
1 & \phantom{.}(1, 0).
\end{array}
\]

\subsection{The morphism $\varphi_1+\varphi_2: \Delta(4,0) \to \Delta(5,0)$}
The character of the image of the morphism
$\varphi_1+\varphi_2$ is $\ell(4, 0) + \ell( 2, 1) + \ell(0, 2) +
\ell(3, 0) + \ell(2, 0) + \ell(0, 1) + \ell(1, 0) + 2\ell(0, 0)$. The
socle layers of its cokernel are:
\[
\begin{array}{c|c}
\text{socle layer} & \text{highest weights}\\ \hline
5 & (5, 0)\\
4 & (4, 0),\; (1, 0)\\
3 & (1, 2),\; (3, 0),\; (0, 0)\\
2 & (2, 0),\; (2, 0)\\
1 & \phantom{.}(0, 2),\; (0, 2),\; (0, 0).
\end{array}
\]

\subsection{A quotient of $\Delta(5,0)$}
The submodule of $\Delta(5,0)$ generated by both $m_1(4,0)$ and
$m_2(4,0)$ may also be of interest. Its character is $2\ell(4, 0) +
\ell(2, 1) + 2\ell(0, 2) + \ell(3, 0) + 2\ell(2, 0) + \ell(0, 1)\ +
\ell(1, 0) + 3\ell(0, 0)$. The corresponding quotient module
$\Delta(5,0)/\gen{m_1(4,0), m_2(4,0)}$ has socle layers of the form:
\[
\begin{array}{c|c}
\text{socle layer} & \text{highest weights}\\ \hline
5 & (5, 0)\\
4 & (1, 0)\\
3 & (1, 2),\; (3, 0)\\
2 & (2, 0)\\
1 & \phantom{.}(0, 2),\; (0, 0).
\end{array}
\]

\subsection{The morphism $\Delta(2,1) \to \Delta(5,0)$}
The character of the image of the morphism mapping the generator of
$\Delta(2,1)$ onto $m(2,1)$ is $\ell(2, 1) + \ell(0, 1) + \ell(0, 0)$.
The socle layers of the cokernel of this morphism are:
\[
\begin{array}{c|c}
\text{socle layer} & \text{highest weights}\\ \hline
10 & (5, 0)\\
9 & (1, 0)\\
8 & (1, 2),\; (3, 0)\\
7 & (2, 0)\\
6 & (0, 2),\; (0, 0)\\
5 & (4, 0)\\
4 & (0, 0)\\
3 & (4, 0),\; (0, 2),\; (2, 0)\\
2 & (3, 0),\; (0, 0)\\
1 & \phantom{.}(0, 2),\; (2, 0),\; (1, 0).
\end{array}
\]

\subsection{The morphism $\Delta(0,2) \to \Delta(5,0)$}
The character of the image of the morphism sending the generator of
$\Delta(0,2)$ to $m(0,2)$ is $\ell(0, 2) + \ell(3, 0) + \ell(1, 0) +
\ell(0, 0)$.  The layers of the socle series of the cokernel of this
morphism are:
\[
\begin{array}{c|c}
\text{socle layer} & \text{highest weights}\\ \hline
9 & (5, 0)\\
8 & (1, 0)\\
7 & (1, 2),\; (3, 0)\\
6 & (2, 0)\\
5 & (0, 2),\; (0, 0)\\
4 & (4, 0)\\
3 & (4, 0),\; (0, 2),\; (0, 0)\\
2 & (2, 1),\; (2, 0),\; (0, 0)\\
1 & \phantom{.}(2, 0),\; (0, 1).
\end{array}
\]

\subsection{The morphism $\Delta(3,0) \to \Delta(5,0)$}
The character of the image of the morphism sending the generator of
$\Delta(3,0)$ to $m(3,0)$ is $\ell(3, 0) + \ell(1, 0) + \ell(0,
0)$. The layers of the socle series of the cokernel of this morphism
are:
\[
\begin{array}{c|c}
\text{socle layer} & \text{highest weights}\\ \hline
9 & (5, 0)\\
8 & (1, 0)\\
7 & (1, 2),\; (3, 0)\\
6 & (2, 0)\\
5 & (0, 2),\; (0, 0)\\
4 & (4, 0)\\
3 & (4, 0),\; (0, 2),\; (0, 0)\\
2 & (2, 1),\; (2, 0),\; (0, 0)\\
1 & \phantom{.}(0, 2),\; (2, 0),\; (0, 1).
\end{array}
\]

\subsection{The morphism $\Delta(2,0) \to \Delta(5,0)$}
The character of the image of the morphism $\Delta(2,0) \to
\Delta(5,0)$ defined by sending the generator of $\Delta(2,0)$ onto
$m(2,0)$ is $\ell(2, 0) + \ell(0, 1)$. The socle layers of the
cokernel of this morphism are:
\[
\begin{array}{c|c}
\text{socle layer} & \text{highest weights}\\ \hline
11 & (5, 0)\\
10 & (1, 0)\\
9 & (1, 2),\; (3, 0)\\
8 & (2, 0)\\
7 & (0, 2),\; (0, 0)\\
6 & (4, 0)\\
5 & (0, 0)\\
4 & (4, 0),\; (0, 2),\; (2, 0)\\
3 & (0, 2),\; (3, 0),\; (0, 0)\\
2 & (2, 1),\; (1, 0)\\
1 & \phantom{.}(0, 0).
\end{array}
\]

\subsection{The morphism $\Delta(1,0) \to \Delta(5,0)$}
The character of the image of the morphism $\Delta(1,0) \to
\Delta(5,0)$ defined by mapping the generator of $\Delta(1,0)$ onto
$m(1,0)$ is $\ell(1, 0) + \ell(0, 0)$. This image is isomorphic to
$\Delta(1,0)$. The socle layers of the cokernel of this morphism 
are:
\[
\begin{array}{c|c}
\text{socle layer} & \text{highest weights}\\ \hline
9 & (5, 0)\\
8 & (1, 0)\\
7 & (1, 2),\; (3, 0)\\
6 & (2, 0)\\
5 & (0, 2),\; (0, 0)\\
4 & (4, 0),\; (4, 0)\\
3 & (0, 2),\; (0, 0),\; (0, 0)\\
2 & (2, 1),\; (0, 2),\; (2, 0),\; (2, 0)\\
1 & \phantom{.}(3, 0),\; (0, 1).
\end{array}
\]

\subsection{The morphisms $\Delta(0,1) \to \Delta(5,0)$ and
  $\Delta(0,0) \to \Delta(5,0)$} The images of the morphisms defined by
sending the generators of $\Delta(0,1)$ and $\Delta(0,0)$ to the
corresponding maximal vectors of the appropriate weight are simple
modules. We omit the socle series layers of their cokernels.


\section{\bf Structure of $\Delta(2,2)$}\label{ss:Delta22}\noindent
The highest weights of the composition factors of the socle series
layers of this Weyl module are:
\[
\begin{array}{c|c}
\text{socle layer} & \text{highest weights}\\ \hline
13 & (2, 2)\\
12 & (1, 2)\\
11 & (5, 0)\\
10 & (1, 0)\\
9 & (0, 3),\; (1, 2),\; (3, 0)\\
8 & (2, 0)\\
7 & (0, 2),\; (0, 0)\\
6 & (4, 0),\; (2, 0)\\
5 & (0, 0),\; (0, 0)\\
4 & (4, 0),\; (0, 2),\; (2, 0)\\
3 & (0, 2),\; (3, 0),\; (0, 0)\\
2 & (2, 1),\; (2, 0),\; (1, 0)\\
1 & \phantom{.}(0, 1),\; (0, 0).
\end{array}
\]
This module is ambiguous, with two linearly independent maximal
vectors of weight $(4,0)$. The complete list of maximal vectors in
$\Delta(2,2)$ is:
\begin{align*}
m(2, 2) &= v_0\\
m(0, 3) &= y_1 v_0\\
m(5, 0) &= y_2 v_0\\
m(1, 2) &= y_1^{(2)} y_2 v_0\\
m_1(4, 0) &= y_1 y_2 y_3 v_0\\
m_2(4, 0) &= y_2 y_4 v_0\\
m(2, 1) &= y_1 y_2 y_4 v_0\\
m(0, 2) &= y_1 y_2 y_5 v_0+y_1 y_3 y_4 v_0+y_1 y_6 v_0\\
m(3, 0) &= y_1 y_2 y_3 y_4 v_0+y_1 y_2 y_6 v_0+y_2 y_3 y_5 v_0+y_3 y_6 v_0\\
m(2, 0) &= y_1 y_2 y_4 y_6 v_0+y_2 y_3 y_4 y_5 v_0+y_2 y_4^{(3)} v_0+y_3 y_4 y\
_6 v_0\\
m(0, 1) &= y_1 y_2 y_3 y_4 y_5 v_0+y_1 y_2 y_4^{(3)} v_0+y_1 y_3 y_4 y_6 v_0\\
m(1, 0) &= y_1 y_2 y_3^{(2)} y_4 y_5 v_0+y_2 y_4 y_5 y_6 v_0+y_3^{(3)} y_4 y_5\
 v_0\\
m(0, 0) &= y_1 y_2 y_3 y_4 y_5 y_6 v_0 .
\end{align*}

\subsection{The morphism $\Delta(1,2) \to \Delta(2,2)$}
The character of the image of the morphism $\Delta(1,2) \to
\Delta(2,2)$ defined by sending the generator of $\Delta(1,2)$ to
$m(1,2)$ is $\ell(1, 2) + \ell(4, 0) + \ell(2, 1) + 2\ell(0, 2) +
\ell(3, 0) + 2\ell(2, 0) \ + \ell(0, 1) + \ell(1, 0) + 3\ell(0, 0)$.
This image is isomorphic to the quotient of $\Delta(1,2)$ by its
third socle; see \S\ref{ss:20to12}.  The socle layers of the cokernel
of this morphism are:
\[
\begin{array}{c|c}
\text{socle layer} & \text{highest weights}\\ \hline
7 & (2, 2)\\
6 & (0, 3),\; (1, 2)\\
5 & (2, 0)\\
4 & (5, 0),\; (0, 0)\\
3 & (4, 0),\; (1, 0)\\
2 & (3, 0),\; (0, 0)\\
1 & \phantom{.}(0, 2),\; (2, 0).
\end{array}
\]

\subsection{The morphism $\Delta(5,0) \to \Delta(2,2)$}
The character of the image of the morphism given by sending the
generator of $\Delta(5,0)$ to $m(5,0)$ is $\ell(5, 0) + \ell(1, 2) +
2\ell(4, 0) + \ell(2, 1) + 3\ell(0, 2) + 2\ell(3, 0)\ + 3\ell(2, 0) +
\ell(0, 1) + 2\ell(1, 0) + 4\ell(0, 0)$. The highest weights of the
composition factors of the socle series layers of the cokernel of this
morphism are:
\[
\begin{array}{c|c}
\text{socle layer} & \text{highest weights}\\ \hline
4 & (2, 2)\\
3 & (0, 3),\; (1, 2)\\
2 & (2, 0)\\
1 & \phantom{.}(0, 0).
\end{array}
\]

\subsection{The morphism $\Delta(0,3) \to \Delta(2,2)$}
The character of the image of the morphism sending the generator of
$\Delta(0,3)$ to $m(5,0)$ is $\ell(0, 3) + \ell(4, 0) + \ell(0, 2) +
\ell(3, 0) + 2\ell(2, 0) + \ell(1, 0) + 3\ell(0, 0)$. The socle series
layers of the cokernel of this morphism are:
\[
\begin{array}{c|c}
\text{socle layer} & \text{highest weights}\\ \hline
11 & (2, 2)\\
10 & (1, 2)\\
9 & (5, 0)\\
8 & (1, 0)\\
7 & (1, 2),\; (3, 0)\\
6 & (2, 0)\\
5 & (0, 2),\; (0, 0)\\
4 & (4, 0)\\
3 & (0, 2),\; (0, 0)\\
2 & (2, 1),\; (2, 0)\\
1 & \phantom{.}(0, 1).
\end{array}
\]

\subsection{The morphism $\varphi_1: \Delta(4,0) \to \Delta(2,2)$}
The character of the image of the morphism $\Delta(4,0) \to
\Delta(2,2)$ mapping the generator of $\Delta(4,0)$ onto $m_1(4,0)$ is
$\ell(4, 0) + \ell(0, 2) + \ell(3, 0) + \ell(2, 0) + \ell(1, 0) +
2\ell(0, 0)$. The highest weights of the composition factors of the socle series layers of the cokernel of this morphism
are:

\[
\begin{array}{c|c}
\text{socle layer} & \text{highest weights}\\ \hline
11 & (2, 2)\\
10 & (1, 2)\\
9 & (5, 0)\\
8 & (1, 0)\\
7 & (1, 2),\; (3, 0)\\
6 & (2, 0)\\
5 & (0, 2),\; (0, 0)\\
4 & (4, 0)\\
3 & (0, 3),\; (0, 2),\; (0, 0)\\
2 & (2, 1),\; (2, 0),\; (2, 0)\\
1 & \phantom{.}(0, 1),\; (0, 0) .
\end{array}
\]

\subsection{The morphism $\varphi_2:\Delta(4,0) \to \Delta(2,2)$}
The character of the image of the morphism $\Delta(4,0) \to
\Delta(2,2)$ sending the generator of $\Delta(4,0)$ onto $m_2(4,0)$ is
$ \ell(4, 0) + \ell(2, 1) + \ell(0, 2) + \ell(2, 0) + \ell(0, 1) +
2\ell(0, 0)$. The highest weights of the composition factors of the socle series layers of the cokernel of this morphism
are:
\[
\begin{array}{c|c}
\text{socle layer} & \text{highest weights}\\ \hline
12 & (2, 2)\\
11 & (1, 2)\\
10 & (5, 0)\\
9 & (1, 0)\\
8 & (0, 3),\; (1, 2),\; (3, 0)\\
7 & (2, 0)\\
6 & (0, 2),\; (0, 0)\\
5 & (4, 0)\\
4 & (0, 0)\\
3 & (0, 2),\; (2, 0),\; (2, 0)\\
2 & (3, 0),\; (0, 0)\\
1 & \phantom{.}(1, 0).
\end{array}
\]

\subsection{The morphism $\varphi_1+\varphi_2$}
The character of the image of the morphism $\Delta(4,0) \to
\Delta(2,2)$ mapping the generator of $\Delta(4,0)$ onto
$m_1(4,0)+m_2(4,0)$ is $\ell(4, 0) + \ell(2, 1) + \ell(0, 2) + \ell(3,
0) + \ell(2, 0) + \ell(0, 1) + \ \ell(1, 0) + 2\ell(0, 0)$. The
highest weights of the composition factors of the socle series layers
of the cokernel of this morphism are:
\[
\begin{array}{c|c}
\text{socle layer} & \text{highest weights}\\ \hline
8 & (2, 2)\\
7 & (0, 3),\; (1, 2)\\
6 & (2, 0)\\
5 & (5, 0),\; (0, 0)\\
4 & (4, 0),\; (1, 0)\\
3 & (1, 2),\; (3, 0),\; (0, 0)\\
2 & (2, 0),\; (2, 0)\\
1 & \phantom{.}(0, 2),\; (0, 2),\; (0, 0).
\end{array}
\]

\subsection{Another quotient of $\Delta(2,2)$}
The submodule $\gen{m_1(4,0), m_2(4,0)}$ generated by both maximal
vectors of weight $(4,0)$ has character $2\ell(4, 0) + \ell(2, 1) +
2\ell(0, 2) + \ell(3, 0) + 2\ell(2, 0) + \ell(0, 1)\ + \ell(1, 0) +
3\ell(0, 0)$. The highest weights of the composition factors of the
socle series layers of the corresponding quotient module $\Delta(2,2)/
\gen{m_1(4,0), m_2(4,0)}$ are:
\[
\begin{array}{c|c}
\text{socle layer} & \text{highest weights}\\ \hline
7 & (2, 2)\\
6 & (1, 2)\\
5 & (5, 0)\\
4 & (1, 0)\\
3 & (0, 3),\; (1, 2),\; (3, 0)\\
2 & (2, 0),\; (2, 0)\\
1 & \phantom{.}(0, 2),\; (0, 0),\; (0, 0).
\end{array}
\]

\subsection{The morphism $\Delta(2,1) \to \Delta(2,2)$}
The character of the image of the morphism mapping the generator of
$\Delta(2,1)$ onto $m(2,1)$ is $\ell(2, 1) + \ell(0, 1) + \ell(0,
0)$. The highest weights of the composition factors of the socle series layers of the cokernel of this morphism are:
\[
\begin{array}{c|c}
\text{socle layer} & \text{highest weights}\\ \hline
12 & (2, 2)\\
11 & (1, 2)\\
10 & (5, 0)\\
9 & (1, 0)\\
8 & (0, 3),\; (1, 2),\; (3, 0)\\
7 & (2, 0)\\
6 & (0, 2),\; (0, 0)\\
5 & (4, 0),\; (2, 0)\\
4 & (0, 0),\; (0, 0)\\
3 & (4, 0),\; (0, 2),\; (2, 0)\\
2 & (3, 0),\; (0, 0)\\
1 & \phantom{.}(0, 2),\; (2, 0),\; (1, 0).
\end{array}
\]

\subsection{The morphism $\Delta(0,2) \to \Delta(2,2)$}
The character of the image of the morphism sending the generator of
$\Delta(0,2)$ onto $m(0,2)$ is $\ell(0, 2) + \ell(3, 0) + \ell(1, 0) +
\ell(0, 0)$.  The highest weights of the composition factors of the socle series layers of the cokernel of this morphism
are:
\[
\begin{array}{c|c}
\text{socle layer} & \text{highest weights}\\ \hline
11 & (2, 2)\\
10 & (1, 2)\\
9 & (5, 0)\\
8 & (1, 0)\\
7 & (1, 2),\; (3, 0)\\
6 & (0, 3),\; (2, 0)\\
5 & (0, 2),\; (2, 0),\; (0, 0)\\
4 & (4, 0),\; (0, 0)\\
3 & (4, 0),\; (0, 2),\; (0, 0)\\
2 & (2, 1),\; (2, 0),\; (0, 0)\\
1 & \phantom{.}(2, 0),\; (0, 1).
\end{array}
\]

\subsection{The morphism $\Delta(3,0) \to \Delta(2,2)$}
The character of the image of the morphism mapping the generator of
$\Delta(3,0)$ onto $m(3,0)$ is $\ell(3, 0) + \ell(1, 0) + \ell(0,
0)$. The highest weights of the composition factors of the socle
series layers of the cokernel of this morphism are:
\[
\begin{array}{c|c}
\text{socle layer} & \text{highest weights}\\ \hline
11 & (2, 2)\\
10 & (1, 2)\\
9 & (5, 0)\\
8 & (1, 0)\\
7 & (0, 3),\; (1, 2),\; (3, 0)\\
6 & (2, 0),\; (2, 0)\\
5 & (0, 2),\; (0, 0),\; (0, 0)\\
4 & (4, 0),\; (4, 0)\\
3 & (0, 2),\; (0, 0),\; (0, 0)\\
2 & (2, 1),\; (2, 0),\; (2, 0)\\
1 & \phantom{.}(0, 2),\; (0, 1).
\end{array}
\]

\subsection{The morphism $\Delta(2,0) \to \Delta(2,2)$}
The character of the image of the morphism sending the generator of
$\Delta(2,0)$ to $m(2,0)$ is $\ell(2, 0) + \ell(0, 1)$. The socle
series layers of the cokernel of this morphism are:
\[
\begin{array}{c|c}
\text{socle layer} & \text{highest weights}\\ \hline
13 & (2, 2)\\
12 & (1, 2)\\
11 & (5, 0)\\
10 & (1, 0)\\
9 & (0, 3),\; (1, 2),\; (3, 0)\\
8 & (2, 0)\\
7 & (0, 2),\; (0, 0)\\
6 & (4, 0),\; (2, 0)\\
5 & (0, 0),\; (0, 0)\\
4 & (4, 0),\; (0, 2),\; (2, 0)\\
3 & (0, 2),\; (3, 0),\; (0, 0)\\
2 & (2, 1),\; (1, 0)\\
1 & \phantom{.}(0, 0).
\end{array}
\]

\subsection{The morphism $\Delta(1,0) \to \Delta(2,2)$}
The character of the image of the morphism mapping the generator of
$\Delta(1,0)$ onto $m(1,0)$ is $\ell(1, 0) + \ell(0, 0)$. The socle
series layers of the cokernel of this morphism are:
\[
\begin{array}{c|c}
\text{socle layer} & \text{highest weights}\\ \hline
11 & (2, 2)\\
10 & (1, 2)\\
9 & (5, 0)\\
8 & (1, 0)\\
7 & (0, 3),\; (1, 2),\; (3, 0)\\
6 & (2, 0),\; (2, 0)\\
5 & (0, 2),\; (0, 0),\; (0, 0)\\
4 & (4, 0),\; (4, 0)\\
3 & (0, 2),\; (0, 0),\; (0, 0)\\
2 & (2, 1),\; (0, 2),\; (2, 0),\; (2, 0)\\
1 & \phantom{.}(3, 0),\; (0, 1).
\end{array}
\]

\subsection{The morphisms from $\Delta(0,1)$ and $\Delta(0,0)$
  into $\Delta(2,2)$}
There are two additional morphisms $\Delta(\lambda) \to \Delta(2,2)$
defined by mapping the generator of $\Delta(\lambda)$ to $m(\lambda)$,
for $\lambda = (0,1)$ and $(0,0)$. The images of these two morphisms
are simple modules. We omit giving the socle series of their
cokernels.

\begin{bibdiv}
\begin{biblist}

\bib{Alperin}{article}{
   author={Alperin, J. L.},
   title={Diagrams for modules},
   journal={J. Pure Appl. Algebra},
   volume={16},
   date={1980},
   number={2},
   pages={111--119},
}

\bib{BNPS:St}{article}{
   author={Bendel, C. P.},
   author={Nakano, D. K.},
   author={Pillen, C.},
   author={Sobaje, P.},
   title={On tensoring with the Steinberg representation},
   journal={Transform. Groups},
   volume={25},
   date={2020},
   number={4},
   pages={981--1008},
}

\bib{BNPS:counter}{article}{
   author={Bendel, Christopher P.},
   author={Nakano, Daniel K.},
   author={Pillen, Cornelius},
   author={Sobaje, Paul},
   title={Counterexamples to the tilting and $(p,r)$-filtration conjectures},
   journal={J. Reine Angew. Math.},
   volume={767},
   date={2020},
   pages={193--202},
}

\bib{BNPS1}{article}{
   author={Bendel, Christopher P.},
   author={Nakano, Daniel K.},
   author={Pillen, Cornelius},
   author={Sobaje, Paul},
   title={On Donkin's tilting module conjecture I: Lowering the prime},
   journal={Represent. Theory},
   volume={26},
   date={2022},
   pages={455--497},
}

\bib{BNPS2}{article}{
   author={Bendel, Christopher P.},
   author={Nakano, Daniel K.},
   author={Pillen, Cornelius},
   author={Sobaje, Paul},
   title={On Donkin's tilting module conjecture II: counterexamples},
   journal={Compos. Math.},
   volume={160},
   date={2024},
   number={6},
   pages={1167--1193},
}

\bib{BNPS3}{article}{
   author={Bendel, Christopher P.},
   author={Nakano, Daniel K.},
   author={Pillen, Cornelius},
   author={Sobaje, Paul},
   title={On Donkin's Tilting Module Conjecture III: New generic lower
   bounds},
   journal={J. Algebra},
   volume={655},
   date={2024},
   pages={95--109},
}

\bib{BNPS:restrict}{article}{
   author={Bendel, Christopher P.},
   author={Nakano, Daniel K.},
   author={Pillen, Cornelius},
   author={Sobaje, Paul},
   title={Restricting rational modules to Frobenius kernels},
   status={preprint, %
     \href{https://arXiv.org/abs/2405.03973}{arXiv:2405.03973}},
   year={2024},
}   

\bib{BDM1}{article}{
   author={Bowman, C.},
   author={Doty, S. R.},
   author={Martin, S.},
   title={Decomposition of tensor products of modular irreducible
   representations for ${\rm SL}_3$},
   note={With an appendix by C. M. Ringel},
   journal={Int. Electron. J. Algebra},
   volume={9},
   date={2011},
   pages={177--219},
}


\bib{Donkin}{book}{
   author={Donkin, Stephen},
   title={Rational representations of algebraic groups},
   series={Lecture Notes in Mathematics},
   volume={1140},
   note={Tensor products and filtration},
   publisher={Springer-Verlag, Berlin},
   date={1985},
}

\bib{Donkin:SA1}{article}{
   author={Donkin, S.},
   title={On Schur algebras and related algebras. I},
   journal={J. Algebra},
   volume={104},
   date={1986},
   number={2},
   pages={310--328},
}

\bib{Donkin:SA2}{article}{
   author={Donkin, Stephen},
   title={On Schur algebras and related algebras. II},
   journal={J. Algebra},
   volume={111},
   date={1987},
   number={2},
   pages={354--364},
}

\bib{Donkin:tilt}{article}{
   author={Donkin, Stephen},
   title={On tilting modules for algebraic groups},
   journal={Math. Z.},
   volume={212},
   date={1993},
   number={1},
   pages={39--60},
}

\bib{Doty:manual}{article}{
   author={Doty, Stephen},
   title={WeylModules, version 2.1},
   subtitle={for simple simply-connected algebraic groups},
   date={2024},
   note={A GAP package},
   eprint={https://doty.math.luc.edu/WeylModulesNew},
}

\bib{GAP}{article}{
  author={GAP},
  title={Groups, Algorithms, and Programming},
  date={2024},
  eprint={https://www.gap-system.org}, 
}

\bib{Green:80}{book}{
   author={Green, James A.},
   title={Polynomial representations of ${\rm GL}_{n}$},
   series={Lecture Notes in Mathematics},
   volume={830},
   publisher={Springer-Verlag, Berlin-New York},
   date={1980},
}

\bib{Humphreys:06}{book}{
   author={Humphreys, James E.},
   title={Modular representations of finite groups of Lie type},
   series={London Mathematical Society Lecture Note Series},
   volume={326},
   publisher={Cambridge University Press, Cambridge},
   date={2006},
}

\bib{Irving}{article}{
   author={Irving, Ronald S.},
   title={The structure of certain highest weight modules for ${\rm SL}_3$},
   journal={J. Algebra},
   volume={99},
   date={1986},
   number={2},
   pages={438--457},
}

\bib{Jan:80}{article}{
   author={Jantzen, Jens C.},
   title={Darstellungen halbeinfacher Gruppen und ihrer Frobenius-Kerne},
   language={German},
   journal={J. Reine Angew. Math.},
   volume={317},
   date={1980},
   pages={157--199},
}

\bib{Jan:book}{book}{
   author={Jantzen, Jens Carsten},
   title={Representations of algebraic groups},
   series={Mathematical Surveys and Monographs},
   volume={107},
   edition={2},
   publisher={American Mathematical Society, Providence, RI},
   date={2003},
}

\bib{Mathieu}{article}{
   author={Mathieu, Olivier},
   title={Filtrations of $G$-modules},
   journal={Ann. Sci. \'{E}cole Norm. Sup. (4)},
   volume={23},
   date={1990},
   number={4},
   pages={625--644},
}

\bib{Xi}{article}{
   author={Xi, Nanhua},
   title={Maximal and primitive elements in Weyl modules for type $A_2$},
   journal={J. Algebra},
   volume={215},
   date={1999},
   number={2},
   pages={735--756},
}

\end{biblist}
\end{bibdiv}
\end{document}